\numberwithin{equation}{section}
\newtheorem{theorem}{Theorem}[section]
\newtheorem{lemma}[theorem]{Lemma}
\theoremstyle{definition}
\newtheorem{example}[theorem]{Example}
\newtheorem{remark}[theorem]{Remark}
\newtheorem{assumption}{Assumption}
\newcommand{\A}{\mathfrak{A}}
\newcommand{\X}{\mathfrak{X}}
\newcommand{\dom}{\mathfrak{D}}
\newcommand{\pr}{\mathbf{P}}
\newcommand{\ex}{\mathbf{E}}
\newcommand{\R}{\mathbf{R}}
\newcommand{\ind}{\mathbf{1}}
\newcommand{\sub}{\subseteq}
\newcommand{\ph}{\varphi}
\newcommand{\eps}{\varepsilon}
\newcommand{\expr}[1]{\left( #1 \right)}
\newcommand{\thet}{\vartheta}
\newcommand{\ignore}[1]{}
\newcommand{\dwalk}{d_{\mathrm{w}}}
\newcommand{\cbhi}{C_{\mathrm{BHI}}}
\newcommand{\cnu}{C_{\text{\rm L\'evy}}}
\newcommand{\cnuint}{C_{\text{\rm L\'evy-int}}}
\newcommand{\cnuinf}{C_{\text{\rm L\'evy-inf}}}
\newcommand{\cro}{\rho}
\newcommand{\cgreen}{C_{\mathrm{green}}}
\newcommand{\ctau}{C_{\mathrm{exit}}}
\newcommand{\caux}{C_{\tau}}
\newcommand{\formula}[2][nolabel]%
{%
 \ifthenelse{\equal{#1}{nolabel}}%
 {\begin{align*} #2 \end{align*}}%
 {%
  \ifthenelse{\equal{#1}{}}%
  {\begin{align} #2 \end{align}}%
  {\begin{align} \label{#1} #2 \end{align}}%
 }%
}
\begin{document}

%
%

\title{Martin kernels for Markov processes with jumps}
\author{Tomasz Juszczyszyn, Mateusz Kwa\'snicki}
\thanks{Work supported by the Polish National Science Centre (NCN) grant no. 2011/03/D/ST1/00311}
\address{Tomasz Juszczyszyn, Mateusz Kwa\'snicki \\ Department of Mathematics \\ Wroc\l aw University of Technology \\ ul. Wybrze\.ze Wyspia\'nskiego 27 \\ 50-370 Wroc\l aw, Poland}
\email{mateusz.kwasnicki@pwr.edu.pl}
\date{\today}
\keywords{Markov process, jump process, killed process, boundary Harnack inequality, boundary limit, Martin kernel, Martin boundary, Martin representation}

\begin{abstract}
We prove existence of boundary limits of ratios of positive harmonic functions for a wide class of Markov processes with jumps and irregular domains, in the context of general metric measure spaces. As a corollary, we prove uniqueness of the Martin kernel at each boundary point, that is, we identify the Martin boundary with the topological boundary. We also prove a Martin representation theorem for harmonic functions. Examples covered by our results include: strictly stable L\'evy processes in $\R^d$ with positive continuous density of the L\'evy measure; stable-like processes in $\R^d$ and in domains; and stable-like subordinate diffusions in metric measure spaces.
\end{abstract}

\maketitle

%
%

\section{Introduction}
\label{sec:intro}

The purpose of this article is to study boundary limits of ratios of positive functions which are harmonic in an arbitrary open set with respect to a Markov process with jumps. The proof of our main result, Theorem~\ref{thm:martin}, relies on the \emph{boundary Harnack inequality} for Markov processes with jumps, proved recently in~\cite{bib:bkk15}, and the oscillation reduction argument, developed in~\cite{bib:b97} and~\cite{bib:bkk08}. As an application, we obtain Martin representation of harmonic functions in Theorem~\ref{thm:repr}.

To explain the motivation for our research, we begin with a discussion of the classical case, where harmonicity has its usual meaning: $f$ is harmonic in an open set $D$ if $\Delta f = 0$ in $D$. The boundary Harnack inequality is a statement about positive harmonic functions in an open set, which are equal to zero on a part of the boundary. The result states that if $D$ is regular enough (for example, a Lipschitz domain), $x_0$ is a boundary point of $D$, $f$ and $g$ are positive and harmonic in $D$, and both $f$ and $g$ converge to $0$ on $\partial D \cap B(x_0, R)$, then for every $r \in (0, R)$ the ratio $f / g$ has bounded \emph{relative oscillation} in $D \cap B(x_0, r)$:
\formula[eq:ro]{
 \sup_{x \in D \cap B(x_0, r)} \frac{f(x)}{g(x)} & \le c \inf_{x \in D \cap B(x_0, r)} \frac{f(x)}{g(x)} \, .
}
Here $c = c(D, x_0, r, R)$ is a constant that depends only on the local geometric properties of $D$ near $x_0$, and $B(x_0, r)$ denotes the ball of radius $r$, centred at $x_0$. The boundary Harnack inequality was first proved independently by A.~Ancona (\cite{bib:a78}), B.~Dahlberg (\cite{bib:d77}) and J.-M.~Wu (\cite{bib:w78}) for Lipschitz domains, and then extended by numerous authors to a wider class of domains and elliptic operators. We refer to~\cite{bib:a01,bib:a09,bib:a14,bib:a15,bib:ls14} for further discussion and references.


Under appropriate assumptions on the regularity of $D$, the estimate~\eqref{eq:ro} turns out to be self-improving as $r \to 0^+$, in the sense that the constant $c$ in~\eqref{eq:ro} converges to $1$ as $r \to 0^+$. Equivalently, the boundary limit
\formula[eq:limit]{
 \lim_{\substack{x \to x_0 \\ x \in D}} \frac{f(x)}{g(x)}
}
exists. When $D$ is a Lipschitz domain, then in fact $c(D, x_0, r, R)$ is of order $r^\beta$ as $r \to 0^+$ for some $\beta > 0$, which means that $f / g$ extends to a H\"older continuous function at $x_0$.

A closely related concept of \emph{Martin representation} of positive harmonic functions was introduced by R.~S.~Martin in his beautiful article~\cite{bib:m41}, more than two decades before the boundary Harnack inequality became available. Given the existence of limits~\eqref{eq:limit} (for example, if $D$ is a Lipschitz domain), Martin's result asserts that there is a one-to-one correspondence between positive harmonic functions $f$ in $D$ and positive measures $\mu$ on the boundary of $D$. The two objects are linked by the formula
\formula{
 f(x) & = \int_{\partial D} M_D(x, z) \mu(dz) ,
}
where the \emph{Martin kernel} is defined as the boundary limit of the ratio of \emph{Green functions}:
\formula[eq:martin]{
 M_D(x, z) & = \lim_{\substack{y \to z \\ x \in D}} \frac{G_D(x, y)}{G_D(\tilde{x}, y)} \, .
}
Here $\tilde{x} \in D$ is an arbitrarily fixed reference point.

One of numerous equivalent definitions of harmonicity links harmonic functions with the Brownian motion: $f$ is harmonic in $D$ if and only if $f$ has the \emph{mean-value property} with respect to the distributions of the Brownian motion $X_t$ at first exit times:
\formula[eq:harm]{
 f(x) & = \ex_x f(X({\tau_U})) 
}
for all bounded open sets $U$ such that the closure of $U$ is contained in $D$. Here $\ex_x$ denotes the expectation (and $\pr_x$ will denote the probability) corresponding to the Brownian motion process $X_t$ that starts at $x$, and $\tau_U$ is the time of first exit from $U$:
\formula{
 \tau_U = \inf \{ t \ge 0 : X_t \notin U \} .
}
This probabilistic definition has a number of advantages: it extends immediately to general Markov processes $X_t$, and it captures easily boundary conditions imposed on harmonic functions. More precisely, in the general statement of the boundary Harnack inequality one requires that positive harmonic functions $f$ and $g$ converge to zero at each boundary point in $\partial D \cap B(x_0, R)$ that is \emph{regular for the Dirichlet problem}. This condition translates to requiring that~\eqref{eq:harm} holds for all bounded open sets $U$ such that $\overline{U} \sub D \cup (\partial D \cap B(x_0, R))$, with no reference to the notion of regular boundary points. Here we understand that $f = g = 0$ in $\partial D \cap B(x_0, R)$.

In this article we are interested in Markov processes with jumps, and from now on by saying that a function is harmonic we understand that it has the mean-value property~\eqref{eq:harm} with respect to a Markov process $X_t$ with jumps. In this case in order to evaluate $f(X(\tau_U))$ in~\eqref{eq:harm} the function $f$ needs to be defined everywhere, not just in $D$. For this reason one needs to replace the \emph{boundary} condition $f = g = 0$ in $\partial D \cap B(x_0, R)$ in the statement of the boundary Harnack inequality with the \emph{exterior} condition $f = g = 0$ in $D^c \cap B(x_0, R)$
.

The history of the boundary Harnack inequality for Markov processes with jumps starts with the article by K.~Bogdan (\cite{bib:b97}), where he proved the result for the isotropic stable L\'evy process (equivalently: for the fractional Laplace operator $-(-\Delta)^{\alpha/2}$) and Lipschitz domains. Later this was extended to more general sets (\cite{bib:sw99,bib:bkk08}) and processes (\cite{bib:bbc03,bib:cksv12,bib:g07,bib:ksv09,bib:ksv12,bib:ksv13,bib:ksv14a,bib:ksv14b}). Recently, a rather general result for Markov processes with jumps was proved in~\cite{bib:bkk15}, and this is our starting point in the study of boundary limits~\eqref{eq:limit}.

The existence of the boundary limit~\eqref{eq:limit} in this context was first proved independently by K.~Bogdan (\cite{bib:b99}) and by Z.-Q.~Chen and R.~Song (\cite{bib:cs98}) for the isotropic stable L\'evy process and Lipschitz domains. This required an appropriate modification of the classical reasoning due to the presence of jumps. Since then essentially every time the boundary Harnack inequality was established for a given Markov process with jumps in a given class of domains, the existence of boundary limits~\eqref{eq:limit} followed; see~\cite{bib:ksv15} for the most recent result of this kind. With one exception, however, the class of open sets under consideration was always limited to certain disconnected analogues of non-tangentially accessible domains, typically called \emph{fat sets}. The single more general result is proved in~\cite{bib:bkk08} for the isotropic stable L\'evy process, where completely arbitrary open sets are allowed.

For the existence of boundary limits, we follow the approach of~\cite{bib:bkk08} using the boundary Harnack inequality of~\cite{bib:bkk15}, and prove in our main results, Theorems~\ref{thm:martin} and~\ref{thm:repr}, the existence of boundary limits of ratios of harmonic functions for arbitrary open sets and rather general Markov processes with jumps, as well as Martin representation of such functions. The application of the method developed in~\cite{bib:bkk08} in the present setting requires significant modifications. Further changes are introduced in order to make the description of the proof more accessible; for example, we first give a simpler argument which does not assert uniform convergence with respect to the domain of harmonicity, and only then explain how one improves it to get a domain-uniform version.

The proof of Martin representation theorem for the isotropic stable L\'evy processes in~\cite{bib:bkk08} is self-contained. It is possible to extend the method of~\cite{bib:bkk08} to our general setting, but that would require rather lengthy and technical arguments. For this reason, unlike in~\cite{bib:bkk08}, we refer to the general theory of Martin boundary. Our argument still requires extension of some elements of~\cite{bib:bkk08} for more general Markov processes, but the most involved part of the proof is avoided. For an excellent exposition of the general theory of Martin boundary, we refer to Chapter~14 of~\cite{bib:cw05}.

We conclude the introduction with a description of the structure of this article. The assumptions for the boundary Harnack inequality of~\cite{bib:bkk15} are briefly recalled in Section~\ref{sec:bhi}. We omit a detailed discussion of these conditions and refer the interested reader to the original paper. Instead, we present a number of examples right after the statement of Theorems~\ref{thm:martin} and~\ref{thm:repr} in Section~\ref{sec:main}. We also provide a counter-example, which shows that the boundary limits~\eqref{eq:limit} typically fail to exist in irregular domains when the process $X_t$ has a non-trivial diffusion part. Finally, in Section~\ref{sec:proof} we prove Theorems~\ref{thm:martin} and~\ref{thm:repr}.

%
%

\section{Fundamental assumptions for the boundary Harnack inequality}
\label{sec:bhi}

The formal statement of the assumptions for Theorem~\ref{thm:martin} requires some effort. We assume that $(\X, d, m)$ is a locally compact metric measure space in which all bounded closed sets are compact and $m$ has full support, and that $R_0 > 0$ (possibly $R_0 = \infty$) is a localisation radius such that $\X \setminus B(x, r) \ne \varnothing$ if $x \in \X$ and $0 < r < 2 R_0$.

In~\cite{bib:bkk15} the following four conditions are introduced. A detailed discussion of these assumptions is beyond the scope of the present article, we refer the reader to~\cite{bib:bkk15} for more information. Here we only state the conditions, without explaining in a formal way the notions of \emph{semi-polar} and \emph{polar} sets, \emph{processes in duality} $X_t$ and $\hat{X}_t$, their \emph{generators} $\A$ and $\hat{\A}$, densities $\nu(x, y)$ and $\hat{\nu}(x, y)$ (with respect to the measure $m$) of the \emph{L\'evy kernels} of $X_t$ and $\hat{X}_t$, as well as their \emph{Green functions} $G_D(x, y) = \hat{G}_D(y, x)$. We note that $\nu(x, y)$ describes the intensity of jumps from $x$ to $y$ and it is commonly used throughout the article. The Green function $G_D(x, y)$ is required for Theorem~\ref{thm:repr} only; informally, $G_D(x, y)$ is the average amount of time spent near $y$ by the process $X_t$, started at $x$, until $\tau_D$.

\begin{assumption}
\label{asm:dual}
Hunt processes $X_t$ and $\hat{X}_t$ are dual with respect to the measure $m$. The transition semigroups of $X_t$ and $\hat{X}_t$ are both Feller and strong Feller. Every semi-polar set of $X_t$ is polar.
\end{assumption}

\begin{assumption}
\label{asm:gen}
There is a linear subspace $\dom$ of $\dom(\A) \cap \dom(\hat{\A})$ satisfying the following condition. If $K$ is compact, $D$ is open, and $K \sub D \sub \X$, then there is $f \in \dom$ such that $f(x) = 1$ for $x \in K$, $f(x) = 0$ for $x \in \X \setminus D$, $0 \le f(x) \le 1$ for $x \in \X$, and the boundary of the set $\{x \, : \, f(x) > 0\}$ has measure $m$ zero.
\end{assumption}

\begin{assumption}
\label{asm:levy}
We have $\nu(x, y) = \hat{\nu}(y, x) > 0$ for all $x, y \in \X$, $x \ne y$. If $x_0 \in \X$, $0 < r < R < R_0$, $x \in B(x_0, r)$ and $y \in \X \setminus B(x_0, R)$, then
\formula[eq:nu]{
 \cnu^{-1}{\nu(x_0, y)} \le {\nu(x, y)} \le \cnu{\nu(x_0, y)} , &&
 \cnu^{-1}{\hat\nu(x_0, y)} \le {\hat\nu(x, y)} \le \cnu{\hat\nu(x_0, y)} , 
}
with $\cnu = \cnu(x_0, r, R)$. 
\end{assumption}

\begin{assumption}
\label{asm:green}
If $x_0 \in \X$, $0 < r < s < R < R_0$ and $B = B(x_0, R)$, then 
\begin{equation}
\label{eq:cgreen}
 \cgreen = \cgreen(x_0, r, s, R) = \sup_{x \in B(x_0, r)} \sup_{y \in \X \setminus B(x_0, s)} \max(G_B(x, y), \hat{G}_B(x, y)) < \infty .
\end{equation}
\end{assumption}

We denote
\formula[eq:cro]{
 \cro(K, D) & = \inf_{f} \sup_{x \in \X} \max(\A f(x), \hat{\A} f(x)) ,
}
where the infimum is taken over all functions $f$ described by the Assumption~\ref{asm:gen}. If $x_0 \in \X$ and $0 < r < R < R_0$, then we denote
\formula{
 \cnuinf(x_0, r, R) & = \inf_{y \in \overline{B}(x_0, R) \setminus B(x_0, r)} \min(\nu(x_0, y), \hat{\nu}(x_0, y)) ,
}
and
\formula{
 \ctau(x_0, r) & = \sup_{x \in B(x_0, r)} \max(\ex_x \tau_{B(x_0, r)}, \hat{\ex}_x \hat{\tau}_{B(x_0, r)}) .
}
Following~\cite{bib:b97}, we say that $f$ is a \emph{regular harmonic function} in an open set $D$ if the mean-value property~\eqref{eq:harm} holds with $U = D$. By the strong Markov property, this implies that~\eqref{eq:harm} holds for arbitrary open $U \sub D$, so in particular $f$ is harmonic in $D$. The following theorem is a reformulation of the main result of~\cite{bib:bkk15}.

\begin{theorem}[{Lemma~3.2 and Theorems~3.4 and~3.5 in~\cite{bib:bkk15}}]
\label{thm:bhi}
Suppose that $x_0 \in \X$, $0 < r_1 < r_2 < r_3 < r_6 < R_0$ and a non-negative function $f$ is a regular harmonic function in $D \cap B(x_0, r_6)$, which is equal to zero in $B(x_0, r_6) \setminus D$. Then
\formula{
 f(x) & \approx \cbhi \ex_x \tau_{D \cap B(x_0, r_2)} \int_{\X \setminus B(x_0, r_3)} f(y) \nu(x_0, y) m(dy)
}
for $x \in D \cap B(x_0, r_1)$, where $\cbhi = \cbhi(x_0, r_1, r_2, r_3, r_6)$ is defined as
\formula{
 \cbhi = \cnu(x_0, r_2, r_3) & + 2 \cro(\overline{B}(x_0, r_3) \setminus B(x_0, r_2), B(x_0, r_8) \setminus \overline{B}(x_0, r_1)) \times \\
 & \times \expr{\cgreen(x_0, r_3, r_4, r_6) + \frac{\ctau(x_0, r_6) (\cnu(x_0, r_4, r_5))^2}{m(B(x_0, r_4))}} \times \\
 & \times \expr{\frac{\cro(\overline{B}(x_0, r_5), B(x_0, r_6))}{\cnuinf(x_0, r_5, r_7)} + \cnu(x_0, r_6, r_7) m(B(x_0, r_6))}
}
for some $r_4$, $r_5$, $r_7, r_8$ such that $0 < r_1 < r_2 < r_3 < r_4 < r_5 < r_6 < r_7 < r_8$.
\end{theorem}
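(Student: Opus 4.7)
Set $U := D \cap B(x_0, r_2)$, fix $x \in D \cap B(x_0, r_1)$, and use regular harmonicity to write $f(x) = \ex_x f(X_{\tau_U})$. Since $f$ vanishes on $B(x_0, r_6) \setminus D$, I split according to the threshold $r_3$, setting $I_{\mathrm{out}}(x) := \ex_x[f(X_{\tau_U}); X_{\tau_U} \notin B(x_0, r_3)]$ and $I_{\mathrm{ann}}(x) := \ex_x[f(X_{\tau_U}); X_{\tau_U} \in B(x_0, r_3) \setminus B(x_0, r_2)]$. To handle $I_{\mathrm{out}}$ I apply the Ikeda--Watanabe compensation formula,
\formula{
 I_{\mathrm{out}}(x) = \int_U G_U(x, z) \int_{\X \setminus B(x_0, r_3)} f(y) \nu(z, y) m(dy) m(dz) ,
}
and observe that for $z \in B(x_0, r_2)$ and $y \in \X \setminus B(x_0, r_3)$, Assumption~\ref{asm:levy} replaces $\nu(z, y)$ by $\nu(x_0, y)$ at the cost of a factor $\cnu(x_0, r_2, r_3)$. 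Combined with $\int_U G_U(x, z) m(dz) = \ex_x \tau_U$, and writing $J := \int_{\X \setminus B(x_0, r_3)} f(y) \nu(x_0, y) m(dy)$, this gives $\cnu^{-1} \ex_x \tau_U \cdot J \le I_{\mathrm{out}}(x) \le \cnu \ex_x \tau_U \cdot J$. Since $I_{\mathrm{ann}} \ge 0$, the lower bound $f(x) \ge \cnu^{-1} \ex_x \tau_U \cdot J$ follows immediately, and $I_{\mathrm{out}}$ contributes the first summand $\cnu(x_0, r_2, r_3)$ of $\cbhi$.

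The remaining task --- and the principal obstacle --- is an upper bound on $I_{\mathrm{ann}}(x)$ by a constant multiple of $\ex_x \tau_U \cdot J$. I split this into (i) a hitting-probability estimate for the annulus $A := \overline{B}(x_0, r_3) \setminus B(x_0, r_2)$, and (ii) a pointwise upper bound on $f$ on $A \cap D$. For (i), I use Assumption~\ref{asm:gen} to pick a test function $\phi \in \dom$ with $\phi \equiv 1$ on $A$ and $\phi \equiv 0$ outside $B(x_0, r_8) \setminus \overline{B}(x_0, r_1)$. Since $\phi(x) = 0$, Dynkin's formula yields
\formula{
 \pr_x(X_{\tau_U} \in A) \le \ex_x \phi(X_{\tau_U}) = \ex_x \int_0^{\tau_U} \A\phi(X_s) ds \le \cro(\overline{B}(x_0, r_3) \setminus B(x_0, r_2), B(x_0, r_8) \setminus \overline{B}(x_0, r_1)) \cdot \ex_x \tau_U ,
}
which accounts for the $\cro$ factor of $\cbhi$.

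For (ii), I apply regular harmonicity again at $y \in A \cap D$, this time on an intermediate scale, and iterate the earlier splitting. The contribution from jumps landing outside $B(x_0, r_6)$ is again controlled by Ikeda--Watanabe, now using the Green-function bound $\cgreen(x_0, r_3, r_4, r_6)$ from Assumption~\ref{asm:green}; the contribution from the intermediate shell between radii $r_4$ and $r_6$ is bounded by an auxiliary term of the form $\ctau(x_0, r_6) (\cnu(x_0, r_4, r_5))^2 / m(B(x_0, r_4))$, obtained by balancing the exit time against the mass of an intermediate ball. To re-express the resulting local bound in terms of $J$ itself, a \emph{reverse} comparison is required: the lower bound $\cnuinf(x_0, r_5, r_7)$ on $\nu$ converts a Green-function mass into a portion of $J$, while the remaining mass is absorbed through another Dynkin estimate with constant $\cro(\overline{B}(x_0, r_5), B(x_0, r_6))$ and a crude volume bound $\cnu(x_0, r_6, r_7) m(B(x_0, r_6))$. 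This closing step --- turning a local estimate at $y \in A$ into a multiple of the far-field integral $J$ --- is the most delicate part of the argument, and it is what forces the auxiliary radii $r_4, r_5, r_7, r_8$ into the statement and produces the remaining two factors of $\cbhi$. Multiplying the pointwise bound $f(y) \le C \cdot J$ on $A \cap D$ by $\pr_x(X_{\tau_U} \in A)$ finally yields $I_{\mathrm{ann}}(x) \le C' \ex_x \tau_U \cdot J$, completing the proof.
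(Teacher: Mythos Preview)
The paper does not contain a proof of this statement. Theorem~\ref{thm:bhi} is explicitly presented as a reformulation of Lemma~3.2 and Theorems~3.4 and~3.5 of~\cite{bib:bkk15}; it is quoted, not proved, and is used throughout Section~\ref{sec:proof} as a black box. So there is no ``paper's own proof'' to compare your attempt against.

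That said, your sketch is a plausible reconstruction of the argument in~\cite{bib:bkk15} and is consistent with the way the constant $\cbhi$ is assembled. The decomposition $f = I_{\mathrm{out}} + I_{\mathrm{ann}}$, the Ikeda--Watanabe/L\'evy-system computation giving the $\cnu(x_0, r_2, r_3)$ term, and the Dynkin estimate $\pr_x(X_{\tau_U} \in A) \le \cro(\ldots)\,\ex_x\tau_U$ are all correct and are exactly the ingredients the present paper itself invokes elsewhere (see Lemma~\ref{lem:dynkin} and Proposition~2.1 in~\cite{bib:bkk15} as cited in the proof of Lemma~\ref{lem:sdr}). Your account of step~(ii) is, by your own admission, only a heuristic: you name the constants $\cgreen$, $\ctau$, $\cnuinf$ that must enter and gesture at how they combine, but you do not actually carry out the two-scale iteration that produces the product form $(\cgreen + \ctau\cnu^2/m)\cdot(\cro/\cnuinf + \cnu\,m)$. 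In particular, the ``reverse comparison'' that converts a local supremum bound on $f$ over the annulus back into a multiple of the far-field integral $J$ requires a genuine lower bound on $f$ (or on the exit distribution) somewhere inside $B(x_0, r_6)$, and your sketch does not make explicit which auxiliary harmonic quantity plays this role. This is not a gap so much as an acknowledged ellipsis; for a full proof one has to consult~\cite{bib:bkk15}.
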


Note that it is important that $f$ is non-negative everywhere, not just in $D$. Theorem~\ref{thm:bhi} implies the more classical statement of the boundary Harnack inequality (Theorem~3.5 in~\cite{bib:bkk15}): if $f$ and $g$ satisfy the assumptions of Theorem~\ref{thm:bhi}, then
\formula[eq:bhi]{
 \sup_{x \in D_r} \frac{f(x)}{g(x)} & \le \cbhi^4 \inf_{x \in D_r} \frac{f(x)}{g(x)} \, ,
}
as in~\eqref{eq:ro}. We remark that although the original statement allows for an arbitrary sequence of radii, it will be sufficient for us to consider $r_1 = r$, $r_2 = 2 r$, $r_3 = 3 r$ and $r_6 = 4 r$, and we will commonly write $\cbhi = \cbhi(x_0, r) = \cbhi(x_0, r, 2r, 3r, 4r)$ in this case.

%
%

\section{Main results and examples}
\label{sec:main}

For the existence of limits, we introduce one more definition. If $x_0 \in \X$ and $0 < r < R < R_0$, we let
\formula[eq:nuint]{
 \cnuint = \cnuint(x_0, r, R) = \frac{\int_{\X \setminus B(x_0, r)} \nu(x_0, y) m(dy)}{\int_{\X \setminus B(x_0, R)} \nu(x_0, y) m(dy)} \, .
}
We use a short-hand notation $f \approx c g$ for the two inequalities $c^{-1} g \le f \le c g$, where $c > 0$ is a positive constant.

\begin{theorem}
\label{thm:martin}
Let $D \sub \X$ be open, $x_0 \in \partial D$ and $R > 0$. Suppose that:
\begin{enumerate}[label={\rm(\roman*)}]
\item\label{it:martin:1} $X_t$ satisfies Assumptions~\ref{asm:dual} through~\ref{asm:green};
\item\label{it:martin:2} $\lim\limits_{r \to 0^+} \cnu(x_0, r, R) = 1$;
\item\label{it:martin:3} the constant $\cnu(x_0, r, 2r)$ is bounded in $r$, $0 < 2r < R_0$;
\item\label{it:martin:4} the constant $\cnuint(x_0, r, 2r)$ is bounded in $r$, $0 < 2r < R_0$;
\item\label{it:martin:5} the constant $\cbhi(x_0, r, 2r, 3r, 4r)$ is bounded in $r$, $0 < 4r < R_0$.
\end{enumerate}
Suppose furthermore that non-negative functions $f$ and $g$ are regular harmonic functions in $D \cap B(x_0, R)$ and are equal to zero in $B(x_0, R) \setminus D$. Then either one of $f$ and $g$ is zero everywhere in $D$, or the finite, positive boundary limit of $f(x) / g(x)$ exists as $x \to x_0$, $x \in D$. Furthermore,
\formula[eq:thm:martin]{
 \lim_{\substack{x \to x_0 \\ x \in D}} \frac{f(x)}{g(x)} & = \lim_{r \to 0^+} \frac{\int_{\X \setminus B(0, r)} \nu(x_0, y) f(y) m(dy)}{\int_{\X \setminus B(0, r)} \nu(x_0, y) g(y) m(dy)} \, .
}
\end{theorem}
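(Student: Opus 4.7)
The plan is to adapt the oscillation reduction scheme of~\cite{bib:bkk08} to the present general framework, combining the boundary Harnack inequality (Theorem~\ref{thm:bhi}) with the sharpening $\cnu(x_0, r, \rho) \to 1$ provided by Assumption~(ii). Fix a mesoscopic scale $\rho \in (0, R)$. For $r \in (0, \rho)$ and $x \in D \cap B(x_0, r)$, let $\tau_r = \tau_{D \cap B(x_0, r)}$ and, using $f = 0$ on $D^c \cap B(x_0, R)$, decompose
\begin{align*}
 f(x) = A^{(r,\rho)}_f(x) + B^{(r,\rho)}_f(x),
\end{align*}
where $B^{(r,\rho)}_f(x) = \ex_x[f(X_{\tau_r}); X_{\tau_r} \notin B(x_0, \rho)]$ and $A^{(r,\rho)}_f(x) = \ex_x[f(X_{\tau_r}); X_{\tau_r} \in B(x_0, \rho)]$. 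The exit from $D \cap B(x_0, r)$ into $\X \setminus B(x_0, \rho)$ occurs only by a jump, so the L\'evy system (Ikeda-Watanabe) formula gives
\begin{align*}
 B^{(r,\rho)}_f(x) = \ex_x \int_0^{\tau_r} \int_{\X \setminus B(x_0, \rho)} f(y)\, \nu(X_s, y)\, m(dy)\, ds.
\end{align*}

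Since $X_s \in B(x_0, r)$ for $s < \tau_r$, Assumption~\ref{asm:levy} yields $\nu(X_s, y)/\nu(x_0, y) \in [\cnu(x_0, r, \rho)^{-1}, \cnu(x_0, r, \rho)]$ for $y \notin B(x_0, \rho)$, and hence
\begin{align*}
 B^{(r,\rho)}_f(x) = \bigl(1 + \delta(r, \rho)\bigr)\, \ex_x \tau_r \cdot I_f(\rho),
\end{align*}
with $I_f(\rho) := \int_{\X \setminus B(x_0, \rho)} \nu(x_0, y) f(y)\, m(dy)$ and $|\delta(r, \rho)| \le \cnu(x_0, r, \rho)^2 - 1$. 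By Assumption~(ii), $\delta(r, \rho) \to 0$ as $r \to 0^+$ for fixed $\rho$, uniformly in $x$. The analogous identity for $g$ then gives $B^{(r,\rho)}_f(x)/B^{(r,\rho)}_g(x) \to I_f(\rho)/I_g(\rho)$ uniformly in $x \in D \cap B(x_0, r)$.

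To bound $A^{(r,\rho)}_f(x)$, decompose the annulus $B(x_0, \rho) \setminus B(x_0, r)$ dyadically into shells $B(x_0, 2^{k+1} r) \setminus B(x_0, 2^k r)$ and, on each shell, estimate $f$ via Theorem~\ref{thm:bhi} applied at the corresponding scale. Integrating against the jump intensity and over $s \in [0, \tau_r]$, and using Assumption~(iii) for comparability of $\nu(X_s, y)$ to $\nu(x_0, y)$ on nearby shells, Assumption~(iv) (uniform boundedness of $\cnuint$) for comparability of $I_f$ across consecutive dyadic scales, and Assumption~(v) for uniform boundedness of $\cbhi$, the resulting geometric series gives $A^{(r,\rho)}_f(x) \le C\, \ex_x \tau_r \cdot I_f(\rho)$ with $C$ independent of $r$ and $\rho$. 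Combining with the previous paragraph, $f(x)/g(x)$ differs from $I_f(\rho)/I_g(\rho)$ by a factor tending to~$1$ as $r \to 0^+$, modulo a bounded perturbation from the near parts. Iterating the decomposition at successively smaller scales $\rho_k \to 0^+$ and exploiting Assumption~(ii) at each stage forces the relative oscillation of $f/g$ on $D \cap B(x_0, r)$ to contract geometrically; this simultaneously shows that $I_f(\rho)/I_g(\rho)$ is Cauchy as $\rho \to 0^+$ and that $f(x)/g(x)$ converges to the same limit. Finiteness and positivity of the limit follow from Theorem~\ref{thm:bhi} provided $f, g$ are not identically zero on $D$.

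\textbf{Main obstacle.} The heart of the argument is the contraction step: Theorem~\ref{thm:bhi} alone only yields a bounded multiplicative comparison of $f$ and $g$, whereas we need convergence to a single limit. The sole source of \emph{sharp} rather than \emph{bounded} control in our assumptions is the $(1+o(1))$ factor in the L\'evy-system identity for $B^{(r,\rho)}_f$, which must be leveraged at multiple scales simultaneously. The dyadic near-part bound guarantees that close-by jumps do not destroy the contraction; converting this into a rigorous geometric-contraction lemma for the oscillation of $f/g$, uniformly in the domain $D$, is the most delicate part of the proof.
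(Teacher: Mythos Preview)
Your decomposition $f = A^{(r,\rho)}_f + B^{(r,\rho)}_f$ and the sharp L\'evy-system estimate for the far part $B^{(r,\rho)}_f$ are exactly right and match the paper. The gap is in your treatment of the near part. The claimed bound $A^{(r,\rho)}_f(x) \le C\, \ex_x \tau_r \cdot I_f(\rho)$ with $C$ independent of $r,\rho$ is false in general: it amounts to saying that $\int_{B_\rho \setminus B_r} f(y)\nu(x_0,y)\,m(dy)$ is controlled by $I_f(\rho) = \int_{\X\setminus B_\rho} f(y)\nu(x_0,y)\,m(dy)$, but at an \emph{accessible} boundary point (one where $\int_{D\cap B_R} \ex_y\tau_{D\cap B_R}\,\nu(x_0,y)\,m(dy)=\infty$) the former diverges as $r\to 0^+$ while the latter stays fixed. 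Your dyadic shell argument cannot give a geometric series here because the shells closer to $x_0$ carry ever more mass. Consequently the iteration you describe, which relies on Assumption~(ii) to sharpen the far part at each step, has nothing to bite on when the near part dominates.

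The paper handles this by splitting into two cases. At inaccessible points your scheme essentially works: the near part is genuinely small (Lemma~\ref{lem:ratio:f}, upper bound) and the $(1+o(1))$ from condition~(ii) on the far part gives convergence directly, with no oscillation reduction needed. At accessible points the mechanism is entirely different and does \emph{not} use condition~(ii) at all (cf.\ Remark~\ref{rem:accessible}). One first shows that the near part dominates, $\tilde f_{8r,s}\le \eps f_{8r,s}$ for $r$ small enough, and then applies the boundary Harnack inequality not to $f-ag$ (which fails to be non-negative outside $D_s$) but to the \emph{truncated} functions $f_{8r,s}-a\,g_{8r,s}$ and $A\,g_{8r,s}-f_{8r,s}$, where $a=\inf_{D_s}f/g$ and $A=\sup_{D_s}f/g$. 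These are non-negative \emph{everywhere} because they are harmonic extensions of $(f-ag)\ind_{D_{8r,s}}$, and that is what makes the classical oscillation-reduction step (Lemma~\ref{lem:ro}) go through for jump processes. This truncation trick is the missing idea in your outline; without it there is no contraction in the accessible case.
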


\begin{remark}
\label{rem:accessible}
Condition~\ref{it:martin:2} is required only for \emph{inaccessible} boundary points $x_0$, characterised by the property $\int_{D \cap B(x_0, R)} \ex_y \tau_{D \cap B(x_0, R)} m(dy) < \infty$. The result for \emph{accessible} boundary points $x_0$, for which the integral is infinite, holds under conditions~\ref{it:martin:1} and~\ref{it:martin:3} through~\ref{it:martin:5}.
\end{remark}

\begin{remark}
\label{rem:exit}
Theorem~\ref{thm:martin} also holds with $g(x) = \ex^x \tau_{D \cap B(x_0, R)}$. This is formally shown in Section~\ref{subsec:ex}, but the informal explanation is rather straightforward: $g$ is \emph{essentially} a regular harmonic function in $D \cap B(x_0, R)$ (in sharp contrast with the case of continuous Markov processes).

Indeed, suppose that $\X$ is unbounded, $D$ is a bounded open set and that $\cnu(x_0, r, R)$ converges to $1$ as $R \to \infty$. By Dynkin's formula (see Lemma~\ref{lem:dynkin} and estimate~\eqref{eq:dynkin} below),
\formula{
 \ex^x \tau_D & = \lim_{R \to \infty} \frac{\pr_x(X({\tau_D}) \in \X \setminus B(x_0, R))}{\int_{\X \setminus B(x_0, R)} \nu(x_0, y) m(dy)}
}
is the limit of regular harmonic functions in $D$. Since the estimates in Theorem~\ref{thm:martin} are uniform in $f$ and $g$, we obtain the desired result. (Note that the formal argument is completely different and requires no further assumptions on $\X$ and $X_t$.)
\end{remark}

\begin{remark}
\label{rem:uniform}
As remarked in the introduction, the limit in~\eqref{eq:thm:martin} exists if and only if the relative oscillation of $f$ and $g$ converges to one, that is,
\formula{
 \lim_{r \to 0^+} \frac{\sup_{x \in D \cap B(x_0, r)} (f(x)/g(x))}{\inf_{x \in D \cap B(x_0, r)} (f(x)/g(x))} & = 1 .
}
By inspecting the proof of Theorem~\ref{thm:martin}, one immediately sees that, given $D$ and $x_0$, the boundary limits exist uniformly in $f$ and $g$, in the sense that
\formula{
 \lim_{r \to 0^+} \sup_{f,g} \frac{\sup_{x \in D \cap B(x_0, r)} (f(x)/g(x))}{\inf_{x \in D \cap B(x_0, r)} (f(x)/g(x))} & = 1 ,
}
with the supremum taken over all $f$ and $g$ satisfying the assumptions of the theorem. We remark that in fact one can prove uniformity also in $D$, just as in~\cite{bib:bkk08}, by appropriately modifying the final part of the proof. More formally,
\formula[eq:uniform]{
 \lim_{r \to 0^+} \sup_{D,f,g} \frac{\sup_{x \in D \cap B(x_0, r)} (f(x)/g(x))}{\inf_{x \in D \cap B(x_0, r)} (f(x)/g(x))} & = 1 ,
}
where the supremum is taken over all open sets $D$ and $f$ and $g$ satisfying the assumptions of the theorem (here we let the ratio $\sup/\inf$ be equal to $1$ if $D \cap B(x_0, r)$ is empty). The proof of this result is sketched in Section~\ref{subsec:ex}.
\end{remark}

\begin{remark}
\label{rem:continuity}
It is not necessary to assume that $x_0 \in \partial D$ in Theorem~\ref{thm:martin}. For $x_0 \notin D$ the statement is void, but for $x_0 \in D$ we obtain \emph{relative continuity} of positive harmonic functions: if $f$ and $g$ are positive harmonic functions in $D$, then $f / g$ is continuous in $D$. If the process is conservative, then the constant $1$ is harmonic, and consequently positive harmonic functions are continuous. If in addition the characteristics of the process (that is, the constants in conditions~\ref{it:martin:2} through~\ref{it:martin:5}) do not depend on $x_0$, then harmonic functions are in fact uniformly continuous (see also Remark~\ref{rem:uniform}).
\end{remark}

Before we discuss examples, we provide one application. Recall that the Green function $G_D(x, y)$ is the density of the mean occupation measure of $X_t$ up to $\tau_D$, that is,
\formula{
 \int_A G_D(x, y) m(dy) & = \ex_x \int_0^{\tau_D} \ind_A(X_s) ds .
}
Under Assumptions~\ref{asm:dual} and~\ref{asm:green}, there is a version of $G_D(x, y)$ which is a harmonic function of $x \in D \setminus \{y\}$, and a co-harmonic (that is, harmonic for the dual process) function of $y \in D \setminus \{x\}$. Hence, Theorem~\ref{thm:martin} (or, more precisely, its version for the dual process) immediately implies the existence of the Martin kernel 
\formula{
 M_D(x, z) & = \lim_{\substack{y \to z \\ x \in D}} \frac{G_D(x, y)}{G_D(\tilde{x}, y)} \, .
}
for $z = x_0$ (this is exactly the same as the classical definition~\eqref{eq:martin}). Informally, the Martin boundary $\partial_M D$ of a set $D$ is the set of all possible ways a point $y \in D$ approaches the boundary in such a way that the ratio $G_D(x, y) / G_D(\tilde{x}, y)$ converges for every $x \in D$ (with arbitrarily fixed $\tilde{x} \in D$). More formally, $D \cup \partial_M D$ is the \emph{Constantinescu--Cornea compactification} of $D$ with respect to the family of functions \mbox{$\{ G_D(x, \cdot) / G_D(\tilde{x}, \cdot) : x \in D \}$}: the smallest compact space which contains $D$ and on which these functions have continuous extensions.

\begin{theorem}
\label{thm:repr}
Let $D \sub \X$ be bounded and open, and if $\X$ is compact, then assume in addition that $\ex_x(\tau_D)$ is finite and bounded in $x \in D$. Suppose that the assumptions of Theorem~\ref{thm:martin} are satisfied \emph{uniformly} for all $x_0 \in \overline{D}$. Then the following assertions hold.
\begin{enumerate}[label={\rm(\alph*)}]
\item\label{it:repr:1} The Martin boundary $\partial_M D$ coincides with the topological boundary $\partial D$.
\item\label{it:repr:2} The Martin kernel $M_D(x, z)$ is a harmonic function in $D$ with respect to $x$ if and only if $z$ is an accessible boundary point: \mbox{$\int_{D \cap B(x_0, R)} \ex_y \tau_{D \cap B(x_0, R)} m(dy) = \infty$}.
\item\label{it:repr:3} In this case $M_D(x, z)$ is a \emph{minimal} harmonic function: if $f$ is a harmonic function in $D$ and $0 \le f(x) \le M_D(x, z)$ for all $x \in \X$, then $f(x)$ is a multiple of $M_D(x, z)$.
\item\label{it:repr:4} Every non-negative function $f$ which is a harmonic function in $D$ has a unique representation
\formula[eq:repr]{
\begin{aligned}
 f(x) & = \int_{\X \setminus (D \cup \partial_m D)} \expr{\int_D G_D(x, y) \nu(y, z) m(dy)} f(z) m(dz) \\
 &  \hspace*{15em} + \int_{\partial_m D} M_D(x, z) \mu(dz) ,
\end{aligned}
}
where $\mu$ is a measure on $\partial_m D$, the set of \emph{accessible} boundary points of $D$.
\item\label{it:repr:5} Conversely, given any non-negative function $f$ and any measure $\mu$ on $\partial_m D$, the right-hand side of~\eqref{eq:repr} is either a harmonic function in $D$ or infinity everywhere in $D$.
\end{enumerate}
\end{theorem}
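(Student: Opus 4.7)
The plan is to establish parts~(a)--(e) in order, combining Theorem~\ref{thm:martin} applied to the dual process $\hat{X}_t$ with the abstract Martin theory of Chapter~14 of~\cite{bib:cw05} invoked as a black box. Fix a reference point $\tilde{x} \in D$. For~(a), both $y \mapsto G_D(x, y)$ and $y \mapsto G_D(\tilde{x}, y)$ are regular harmonic for $\hat{X}_t$ in $D$ away from their poles and vanish outside $D$, so Theorem~\ref{thm:martin} applied in the dual at every $z \in \partial D$ yields the limit $M_D(x, z)$; the domain- and function-uniform version from Remark~\ref{rem:uniform} delivers locally uniform convergence in $x$, so each ratio $G_D(x, \cdot)/G_D(\tilde{x}, \cdot)$ extends continuously to $\overline{D}$, and the Constantinescu--Cornea compactification identifies $\partial_M D$ with $\partial D$.

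For~(b), if $z$ is accessible then dominated convergence based on the boundary Harnack estimate of Theorem~\ref{thm:bhi} transfers the mean-value property~\eqref{eq:harm} from $G_D(\cdot, y)/G_D(\tilde{x}, y)$ to $M_D(\cdot, z)$; if $z$ is inaccessible, a short calculation via the Ikeda--Watanabe formula and Remark~\ref{rem:accessible} shows that mass escapes into a unit point measure at $z$, so $M_D(\cdot, z)$ becomes a pure potential rather than a harmonic function. For~(c), assuming $0 \le f \le M_D(\cdot, z)$ with $f$ harmonic, Theorem~\ref{thm:martin} produces $c = \lim_{x \to z} f(x)/M_D(x, z) \in [0, 1]$; the same result applied to $M_D(\cdot, z) - f$ yields the complementary limit $1 - c$, and the uniform oscillation estimate of Remark~\ref{rem:uniform} forces $f \equiv c M_D(\cdot, z)$ in a neighbourhood of $z$ in $D$, after which a Harnack chain propagates equality across~$D$.

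For~(d), exhaust $D$ by relatively compact open sets $D_n \uparrow D$ and use the representation $f(x) = \ex_x f(X_{\tau_{D_n}})$. The contribution from exits landing in $\X \setminus D$ converges by the Ikeda--Watanabe formula to $\int_{\X \setminus D} P_D(x, z) f(z) m(dz)$ with $P_D(x, z) = \int_D G_D(x, y) \nu(y, z) m(dy)$, and by~(b) inaccessible boundary points can be absorbed into this term, giving the first integral in~\eqref{eq:repr}. The residual is a harmonic function whose behaviour at the accessible boundary is controlled by the Martin kernel, so the abstract Riesz--Martin representation from~\cite{bib:cw05} applies and yields the second integral, with uniqueness of $\mu$ on $\partial_m D$ coming from Choquet's uniqueness theorem for the simplex of harmonic functions. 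Part~(e) follows by direct verification: $P_D(x, z)$ and $M_D(\cdot, z)$ (at accessible $z$) are harmonic in $x$, and a Harnack-type dichotomy ensures the right-hand side of~\eqref{eq:repr} is either harmonic throughout $D$ or identically infinite. The main obstacle will be~(c): pinning down the constant $c$ requires combining the existence of the boundary limit with the \emph{uniform} oscillation bound from Remark~\ref{rem:uniform} in a delicate way, rather than following from mere existence of the limit.
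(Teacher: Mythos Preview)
Your outline for (a), (d) and (e) matches the paper, and for (b) the only correction is that plain dominated convergence is not available (there is no obvious integrable majorant for $G_D(X(\tau_{D\setminus\overline{B}(z,R)}),y)/G_D(\tilde{x},y)$); the paper passes to the limit via Vitali's theorem after proving uniform integrability from the boundary Harnack inequality together with a Dynkin-formula estimate. The genuine gap is in~(c). Theorem~\ref{thm:martin} applies only to functions that are \emph{regular} harmonic in $D\cap B(z,R)$ and vanish on $B(z,R)\setminus D$, and the Martin kernel $M_D(\cdot,z)$ at an accessible $z$ fails this hypothesis: by part~(b) it is regular harmonic in every $D\setminus\overline{B}(z,r)$, but not in any $D\cap B(z,R)$ --- if it were, a sweeping argument would make it regular harmonic in all of $D$, hence identically zero since it vanishes off $D$. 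So you cannot invoke Theorem~\ref{thm:martin} at the pole to produce $c=\lim_{x\to z}f(x)/M_D(x,z)$. Even granting such a limit, convergence of the ratio to $c$ does not give $f\equiv cM_D(\cdot,z)$ on any open set, and for non-local operators equality on an open set does not propagate by a Harnack chain: two distinct harmonic functions can agree on a ball.

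The paper's route to minimality is different. One first records, from the abstract theory in~\cite{bib:cw05}, the (not yet unique) representation $f=\int_{\partial_m D}M_D(\cdot,w)\,\mu(dw)$ for harmonic $f$ vanishing off $D$; minimality of $M_D(\cdot,z)$ then reduces to showing that if $\mu$ vanishes on $\partial_m D\cap B(z,4r)$ for some $r>0$ then $f\equiv 0$. Under that hypothesis each $M_D(\cdot,w)$ with $w$ in the support of $\mu$ is regular harmonic in $D\cap B(z,3r)$, so by Fubini $f$ is too, and the boundary Harnack inequality makes $f$ bounded near $z$. A comparison lemma (Lemma~9 of~\cite{bib:bkk08}, reproved in the paper) then uses $0\le f\le M_D(\cdot,z)$ together with regular harmonicity of $M_D(\cdot,z)$ in $D\setminus\overline{B}(z,r)$ to conclude that $f$ is regular harmonic there as well. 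An alternating-exit sweeping argument between $D\cap B(z,4r)$ and $D\setminus\overline{B}(z,r)$ (quasi-left continuity forces the iterated exit times to reach $\tau_D$) finally yields $f(x)=\ex_x f(X(\tau_D))=0$. This combination --- the preliminary Martin representation, the comparison lemma, and the sweeping argument --- is the missing idea in your plan for~(c).
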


\begin{remark}
The terms \emph{accessible} and \emph{inaccessible} correspond to the probabilistic theory of Martin boundary. To be specific, the process $X_t$ killed at the time of first exit from $D$ and conditioned in the sense of Doob by the Martin kernel $M_D(\cdot, z)$ converges at its lifetime to $z$ when $z$ is accessible, and dies out in $D$ when $z$ is inaccessible. We refer to~\cite{bib:cw05} for more information.
\end{remark}

\begin{remark}
Unlike in the case of isotropic stable L\'evy processes in~\cite{bib:bkk08}, description of the infinite part of the Martin boundary of $D$ for unbounded open sets is a completely different problem.
\end{remark}

The boundary Harnack inequality stated in Theorem~\ref{thm:bhi} was applied to a variety of Markov processes in Section~5 of~\cite{bib:bkk15}. The \emph{scale-invariant} version of Theorem~\ref{thm:bhi} under \emph{$\alpha$-stable-like scaling} discussed therein already asserts conditions~\ref{it:martin:1}, \ref{it:martin:3} and~\ref{it:martin:5} in Theorem~\ref{thm:martin}. Verification of the remaining conditions~\ref{it:martin:2} and~\ref{it:martin:4} is typically straightforward, and we obtain several classes of processes for which Theorems~\ref{thm:martin} and~\ref{thm:repr} apply.

In our the first example, we use the result of Example~5.5 in~\cite{bib:bkk15}, where boundary Harnack inequality for L\'evy processes is considered. In the asymmetric case, equality of the notions of semi-polar and polar sets (in Assumption~\ref{asm:dual}) is not trivial,  and this was apparently overlooked in~\cite{bib:bkk15}. Fortunately, for all asymmetric L\'evy processes listed therein, this condition is satisfied by Theorem~2 in~\cite{bib:r88}.

\begin{example}[Strictly stable L\'evy processes]
Let $m$ be the Lebesgue measure in $\R^d$, $R_0 = \infty$. Suppose that $X_t$ is a strictly $\alpha$-stable L\'evy process in $\R^d$, where $d \ge 1$ and $0 < \alpha < 2$. Suppose, furthermore, that the L\'evy measure of $X_t$ has a density function of the form $\nu(z) = \ph(z / |z|) |z|^{-d - \alpha}$, with $\ph$ continuous and positive on the unit sphere (for L\'evy processes, $\nu(x, y) = \nu(y - x)$). It is easy to see that $\cnu(x_0, r, R)$ converges to $1$ as $r \to 0^+$ and that $\cnuint(x_0, r, R) = (R / r)^\alpha$. By Example~5.5 in~\cite{bib:bkk15}, $X_t$ satisfies the other assumptions of Theorem~\ref{thm:martin}, and so we may use Theorems~\ref{thm:martin} and~\ref{thm:repr}.
\end{example}

We remark that the above example can be extended to more general L\'evy processes, including many subordinate Brownian motions and, more generally, unimodal isotropic L\'evy processes. This is based on estimates obtained recently in~\cite{bib:bgr14,bib:bgr15,bib:g14,bib:gr15} and will be studied in detail in~\cite{bib:gk15}. Another extensions can be obtained by allowing the L\'evy kernel to depend on $x$ or restricting it to a domain, as described in the following two examples.

\begin{example}[Stable-like processes]
Let $m$ be the Lebesgue measure in $\R^d$, $R_0 = \infty$. Suppose that $0 < \alpha < 2$ and
\formula{
 \nu(x, y) & = \ph(x, y) |x - y|^{-d - \alpha} ,
}
where $\ph$ is symmetric (that is, $\ph(x, y) = \ph(y, x)$), bounded by positive constants, smooth, and has bounded partial derivatives of all orders. As in Example~5.6 in~\cite{bib:bkk15}, in this case there is a pure-jump process $X_t$ with the L\'evy kernel $\nu(x, y) m(dy)$, and the assumptions of Theorem~\ref{thm:martin} are satisfied.
\end{example}

\begin{example}[Reflected stable processes]
Let $0 < \alpha < 2$. Let $\X$ be the closure of either a Lipschitz domain in $\R^d$ if $\alpha < 1$ or a $C^{1,\alpha+\eps}$ domain in $\R^d$ if $\alpha \ge 1$ (with some $\eps > 0$). Let $m$ be the Lebesgue measure on $\X$, and $\nu(x, y) = c |x - y|^{-d - \alpha}$ for some $c > 0$. Again as in Example~5.6 in~\cite{bib:bkk15}, there is a pure-jump process $X_t$ with the L\'evy kernel $\nu(x, y) m(dy)$, and the assumptions of Theorem~\ref{thm:martin} are satisfied for some $R_0 > 0$.
\end{example}

The state space $\X$ need not be Euclidean.

\begin{example}[Stable-like subordinate diffusions]
Let $\X$ be a sufficiently regular metric measure space in which there exists a diffusion process. For a rigorous definition, we refer to Example~5.7 in~\cite{bib:bkk15}; examples include Riemannian manifolds, Sierpi\'nski gaskets or the Sierpi\'nski carpet. Suppose that $0 < \alpha < \dwalk$, where $\dwalk$ is the \emph{walk dimension} of $\X$ (that is, an approximate scaling exponent for the diffusion process). Finally, let $X_t$ be a process subordinate to the diffusion process, corresponding to the $(\alpha/\dwalk)$-stable subordinator. In Example~5.7 in~\cite{bib:bkk15} it is shown that $X_t$ satisfies conditions~\ref{it:martin:1}, \ref{it:martin:3} and~\ref{it:martin:5} of Theorem~\ref{thm:martin}, and one easily proves that $\cnuint(x_0, r, R) \le c (R / r)^\alpha$ for some $c > 0$. Verification of~\ref{it:martin:2} requires some work, especially when $\X$ is unbounded. For this reason, we only sketch the argument for compact $\X$. For some $c > 0$ we have
\formula{
 \nu(x, y) & = c \int_0^\infty t^{-1 - \alpha/\dwalk} q_t(x, y) dt ,
}
where $q_t(x, y)$ is the transition density of the diffusion process. Since for each $t > 0$, $q_t$ is H\"older continuous, it is easy to see that $\nu(x, y)$ is positive and uniformly continuous in $x \in \overline{B}(x_0, r)$, $y \in \X \setminus B(x_0, R)$, which clearly implies condition~\ref{it:martin:2}. It follows that Theorems~\ref{thm:martin} and~\ref{thm:repr} apply to stable-like subordinate diffusions in compact metric measure spaces.
\end{example}

Surprisingly, Theorem~\ref{thm:martin} is not influenced by killing.

\begin{example}[Processes with a multiplicative functional]
Let $M_t$ be a strong continuous multiplicative functional such that $M_0 = 1$ with probability one for all starting points $x \in \X$. Such a functional describes gradual killing of the process $X_t$, and is typically obtained as the Feynman--Kac functional $M_t = \exp(-\int_0^t V(X_s) ds)$ for some non-negative function $V$. A function $f$ is said to be harmonic with respect to the pair $(X_t, M_t)$ if it has the mean-value property
\formula{
 f(x) & = \ex_x (f(X({\tau_U})) M({\tau_U}))
}
instead of~\eqref{eq:harm}. As in Theorem~5.10 in~\cite{bib:bkk15}, if the assumptions of Theorem~\ref{thm:martin} are satisfied by the process $X_t$, then the conclusion also holds for functions harmonic with respect to the pair $(X_t, M_t)$.
\end{example}

Our final example shows that when $X_t$ has non-vanishing diffusion part, one cannot expect the existence of boundary limits~\eqref{eq:limit} unless some geometric restrictions on $D$ are imposed. For corresponding positive results in smooth domains, see~\cite{bib:ksv13}.

\begin{example}[Mixture of Brownian motion and stable process]
Let $\X = \R$ and let $m$ be the Lebesgue measure. Let $X_t$ be a one-dimensional L\'evy process which is the sum of two independent L\'evy processes: the Brownian motion and the symmetric $\alpha$-stable Lévy process for some $\alpha \in (1, 2)$. That is, the characteristic exponent of $X_t$ is given by $c_1 \xi^2 + c_2 |\xi|^\alpha$ for some $c_1, c_2 > 0$. Denote $D = (-1, 1) \setminus \{0\}$. Let $p_t(y - x)$ be the continuous version of the transition density of $X_t$. Then the three functions
\formula{
 u(x) & = x, & v(x) & = \int_0^\infty (p_t(0) - p_t(x)) dt , & w(x) & = \ex_x |X({\tau_D})|
}
are regular harmonic in $D$: for $u$ this is just the martingale property of $X_t$, for $v$ (the \emph{compensated potential kernel} of $X_t$) this is proved, for example, in~\cite{bib:y10}, while for $w$ it follows directly from the definition. Furthermore, $u(0) = v(0) = w(0) = 0$ and $v(x) = v(-x)$, $w(x) = w(-x)$. It is known that
\formula{
 v(x) & \approx c_3 \min(|x|, |x|^{\alpha-1})
}
for $x \in \R$, with $c_3 = c_3(c_1, c_2, \alpha)$ (see, for example, Lemma~2.14 in~\cite{bib:gr15}). In particular, $v(x) \approx c_3 |x|$ for $x \in D$. Finally, by the boundary Harnack inequality given in Theorem~\ref{thm:bhi} (see Examples~5.5 and~5.13 in~\cite{bib:bkk15} for a detailed discussion), we have
\formula{
 w(x) & \approx c_4 v(x) \approx c_3 c_4 |x|
}
for $x \in (-\tfrac{1}{2}, \tfrac{1}{2})$, with $c_4 = c_4(c_1, c_2, \alpha)$. Let us define
\formula{
 f(x) & = w(x) + u(x) = 2 \ex_x (|X({\tau_D})| \ind_{[1, \infty)}(X({\tau_D}))) , \\
 g(x) & = w(x) - u(x) = 2 \ex_x (|X({\tau_D})| \ind_{(-\infty, -1]}(X({\tau_D}))) .
}
Then $f$ and $g$ are non-negative, regular harmonic in $D$ and equal to zero in $(-1, 1) \setminus D = \{0\}$, so that they satisfy the assumptions of Theorem~\ref{thm:martin}. On the other hand,
\formula{
 \frac{f(x)}{g(x)} - \frac{f(-x)}{g(-x)} & = \frac{w(x) + x}{w(x) - x} - \frac{w(x) - x}{w(x) + x} = \frac{4 x w(x)}{(w(x))^2 - x^2}
}
for $x \in D$. Since $t / (t^2 - x^2)$ is decreasing in $t \in (x, \infty)$, and $w(x) \le c_3 c_4 x$ for $x \in (0, \tfrac{1}{2})$, we obtain
\formula{
 \frac{f(x)}{g(x)} - \frac{f(-x)}{g(-x)} & \ge \frac{4 c_3 c_4}{(c_3 c_4)^2 - 1}
}
for $x \in (0, \tfrac{1}{2})$. In particular, the limit of $f(x) / g(x)$ as $x \to 0$ does not exist. 
\end{example}

%
%

\section{Proofs of main results}
\label{sec:proof}

In this section we prove Theorem~\ref{thm:martin}. We will always assume that $x_0$, $R$ and $D$ are fixed, where $x_0 \in \X$, $0 < 2 R < R_0$ and $D \sub B(x_0, R)$ is an open set. It is also understood that $x_0 \in \partial D$, although, at least formally, the argument extends also to $x \in D$ and $x \notin \overline{D}$. Recall that the notation $f \approx c g$ stands for $c^{-1} g \le f \le c g$ with $c > 0$.

We denote $B_r = B(x_0, r)$, $B_{r,s} = B_s \setminus B_r$, $D_r = D \cap B_r$ and $D_{r,s} = D_s \setminus D_r$ when $0 \le r \le s \le R$. We furthermore define $D_{r,\infty} = D_{r,R} \cup (\X \setminus B_R)$. For a non-negative function $f$ we let
\formula{
 M_{r,\infty}(f) & = \int_{\X \setminus B_r} f(y) \nu(x_0, y) m(dy) , & M_{r,s}(f) & = \int_{B_{r,s}} f(y) \nu(x_0, y) m(dy) .
}
Finally, we let $s_D(x) = \ex_x \tau_D$.

To simplify the notation, we drop $D$ from the notation in subscripts whenever possible, and we write $\tau_r = \tau_{D_r}$, $\tau_{r,s} = \tau_{D_{r,s}}$, $s_r(x) = s_{D_r}(x)$, $\ind_{r,s}(x) = \ind_{D_{r,s}}(x)$ etc.

Our argument is based on the boundary Harnack inequality of~\cite{bib:bkk15}, stated in Theorem~\ref{thm:bhi}. Under the assumptions of Theorem~\ref{thm:martin}, the constant $\cbhi(x_0, r, 2r, 3r, 4r)$ can be chosen so that it does not depend on $r$, as long as $0 < 4 r \le R$, and it will be denoted simply by $\cbhi$ (recall that $x_0$ and $R$ are fixed). In a similar way, we denote $\cnu = \cnu(x_0, r, 2r)$ (with $0 < 2 r < R_0$) and $\cnuint = \cnuint(x_0, r, 2 r)$ (with $0 < 2 r < R_0$), chosen independently of $r$. With one exception, we will only use constants $\cbhi$, $\cnu$ and $\cnuint$ with these parameters.

We prove Theorem~\ref{thm:martin} by considering separately two types of boundary points, which are called \emph{accessible} and \emph{inaccessible} in~\cite{bib:bkk08}. First, however, we introduce some further notation and prove preliminary estimates.

\subsection{Decomposition of harmonic functions}

From now on $f$ and $g$ are functions satisfying the assumptions of Theorem~\ref{thm:martin}, and we assume that neither $f$ nor $g$ is equal to zero almost everywhere. Note that this implies that $f$ and $g$ are strictly positive in $D$. Whenever $0 < r < s \le R$, we decompose $f$ into the sum of two functions, $f_{r,s}$ and $\tilde{f}_{r,s}$, which correspond to the process $X_t$ exiting $D_r$ near its boundary (into $D_{r,s}$) and away of its boundary (into $D_{s,\infty}$):
\formula{
 f_{r,s}(x) & = \ex_x((f \ind_{{r,s}})(X(\tau_{r}))) , &
 \tilde{f}_{r,s}(x) & = \ex_x((f \ind_{{s,\infty}})(X(\tau_{r}))) .
}
Not unexpectedly, a similar notation is used for the function $g$. Clearly, $f = f_{r,s} + \tilde{f}_{r,s}$, and both $f_{r,s}$ and $\tilde{f}_{r,s}$ are non-negative regular harmonic harmonic in $D_r$ which are equal to zero in $B_r \setminus D_r$. Therefore, we can apply Theorem~\ref{thm:bhi} to $f_{4r,s}$ and $\tilde{f}_{4r,s}$ whenever $0 < 4 r < s \le R$.

Note that by Theorem~\ref{thm:bhi} (with $r = \tfrac{R}{4}$), we have
\formula{
 f(x) & \approx \cbhi M_{3R/4,\infty}(f) \ex_x \tau_{{2R/4}}
}
for $x \in D_{R/4}$. Therefore,
\formula[eq:mf:ms]{
 M_{r,s}(f) & \approx \cbhi M_{3R/4,\infty}(f) M_{r,s}(s_{{R/2}})
}
whenever $0 \le r \le s \le \tfrac{R}{4}$. The next result states, in particular, that there is little difference whether we write \smash{$s_{{R/2}}$} or \smash{$s_{R}$} in the above estimate.

\begin{lemma}
\label{lem:sdr}
If $0 < 8 r \le R$, then
\formula{
 \ex_x \tau_{{4r}} & \le \ex_x \tau_{{8r}} \le (1 + \cbhi \cnu \cnuint^3) \ex_x \tau_{{4r}}
}
for $x \in D_r$.
\end{lemma}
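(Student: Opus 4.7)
The lower inequality is immediate from $D_{4r} \subseteq D_{8r}$, which forces $\tau_{4r} \le \tau_{8r}$ almost surely. For the upper inequality, I would apply the strong Markov property at $\tau_{4r}$ to get, for $x \in D_r$,
\formula{
 \ex_x \tau_{8r} - \ex_x \tau_{4r} = \ex_x [s_{8r}(X(\tau_{4r}))] =: F(x) ,
}
noting that $s_{4r}(X(\tau_{4r})) = 0$ almost surely, so in fact $F$ agrees with $s_{8r} - s_{4r}$ on all of $\X$ (each exit-time function being extended by zero outside its natural domain). In particular, $F$ is non-negative, vanishes on $B_{4r} \setminus D_{4r}$, and is regular harmonic on $D_{4r}$, the latter following once more from the strong Markov property applied to the defining identity for $F$.

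I would then apply Theorem~\ref{thm:bhi} to $F$ with the canonical radii $r_1 = r$, $r_2 = 2r$, $r_3 = 3r$, $r_6 = 4r$ to obtain
\formula{
 F(x) \le \cbhi\,\ex_x \tau_{D \cap B_{2r}}\,M_{3r,\infty}(F) \le \cbhi\,\ex_x \tau_{4r}\,M_{3r,\infty}(F) .
}
The remaining task is thus the estimate $M_{3r,\infty}(F) \le \cnu\,\cnuint^3$. Since $F \le s_{8r}$ everywhere and $F$ is supported in $B_{8r}$,
\formula{
 M_{3r,\infty}(F) \le \int_{B_{3r,8r}} s_{8r}(y)\,\nu(x_0,y)\,m(dy) ,
}
and for $y \in B_{8r}$ we have the crude bound $s_{8r}(y) \le \ex_y \tau_{B_{8r}}$.

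To control $\ex_y \tau_{B_{8r}}$, I would use a first-big-jump argument: expressing $\pr_y(X(\tau_{B_{8r}}) \in \X \setminus B_{16r})$ through the Ikeda--Watanabe formula and pulling out the infimum of the L\'evy intensity over $z \in B_{8r}$ yields, via Assumption~\ref{asm:levy} at the scale $(8r, 16r)$ (which is a doubling, so $\cnu(x_0, 8r, 16r) \le \cnu$), the bound
\formula{
 \ex_y \tau_{B_{8r}} \le \frac{\cnu}{M_{16r,\infty}(1)} .
}
Substituting, pulling the factor outside the integral, and telescoping through the chain $3r \to 4r \to 8r \to 16r$ (each step controlled by $\cnuint$ thanks to monotonicity of $\cnuint$ in the outer radius and the standing bound at doubling scales) converts the bound into $\cnu \cdot \cnuint(x_0, 3r, 16r) \le \cnu\,\cnuint^3$, and the lemma follows. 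The delicate point is the scale arithmetic: the assumption $0 < 8r \le R$ together with $2R < R_0$ just guarantees $16r < R_0$, which is why the safety ball in the first-jump step must be $B_{16r}$ and the three $\cnuint$-blocks have to be $(3r,6r)$, $(4r,8r)$ and $(8r,16r)$ -- any larger radius would fall outside the range of the standing bounds.
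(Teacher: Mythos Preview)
Your proof is correct and follows essentially the same route as the paper's: strong Markov, the boundary Harnack inequality applied to $F(x)=\ex_x s_{8r}(X(\tau_{4r}))$, the crude bound $s_{8r}\le \ex_\cdot\tau_{B_{8r}}$, and a first-jump estimate for $\ex_y\tau_{B_{8r}}$ (the paper cites this last step as Proposition~2.1 in~\cite{bib:bkk15} rather than rederiving it). The only cosmetic difference is that the paper enlarges the integration domain from $\X\setminus B_{3r}$ to $\X\setminus B_{2r}$ so that the telescoping $2r\to 4r\to 8r\to 16r$ consists of three exact doublings, whereas you keep $3r$ and handle the first step $(3r,4r)$ by monotonicity in the outer radius via $(3r,6r)$; both give the same constant $\cnu\,\cnuint^{3}$.
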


\begin{proof}
The first inequality is clear. For the other one, we use strong Markov property and Theorem~\ref{thm:bhi}:
\formula{
 \ex_x \tau_{{8r}} - \ex_x \tau_{{4r}} & = \ex_x s_{{8r}}(X(\tau_{{4r}})) \\
 & \le \cbhi \ex_x \tau_{{2r}} \int_{\X \setminus B_{3r}} \ex_y s_{{8r}}(X(\tau_{{4r}})) \nu(x_0, y) m(dy) \\
 & \le \cbhi \ex_x \tau_{{4r}} \int_{\X \setminus B_{2r}} \ex_y \tau_{{8r}} \nu(x_0, y) m(dy) .
}
Furthermore, by Proposition~2.1 in~\cite{bib:bkk15} (combined with the last displayed formula in the proof of this result),
\formula{
 \int_{\X \setminus B_{2r}} \ex_y \tau_{{8r}} \nu(x_0, y) m(dy) & \le \expr{\sup_{x \in \X} \ex_x \tau_{B_{8r}}} \int_{\X \setminus B_{2r}} \nu(x_0, y) m(dy) \\
 & \le \cnu \, \frac{\int_{\X \setminus B_{2r}} \nu(x_0, y) m(dy)}{\int_{\X \setminus B_{16 r}} \nu(x_0, y) m(dy)} .
}
It remains to use~\eqref{eq:nuint}.
\end{proof}

For convenience, we denote
\formula{
 \caux = 1 + \cbhi \cnu \cnuint^3 ,
}
so that $s_{4r}(x) \approx \caux s_{8r}(x)$ if $0 < 8 r \le R$ and $x \in D_r$.

Our next result compares $f_{8r,s}$ with $\tilde{f}_{8r,s}$. For $f_{8r,s}$, we will use Theorem~\ref{thm:bhi}, which states that in $D_{2r}$ we have $f_{8r,s} \approx \cbhi M_{6r,\infty}(f_{8r,s}) \ex_x \tau_{{4r}}$. The same estimate can be written down for $\tilde{f}_{8r,s}$. However, $M_{6r,\infty}(\tilde{f}_{8r,s})$ involves an integral of $\tilde{f}_{8r,s}$ over $D_{6r,8r}$, which is often problematic. A much better estimate for $\tilde{f}_{8r,s}$ can be easily obtained from the following corollary of Dynkin's formula for $X_t$.

\begin{lemma}[{formula~(2.12) in~\cite{bib:bkk15}}]
\label{lem:dynkin}
Let $D \sub \X$ be open and bounded, and let $f$ be a non-negative function equal to zero in $\overline{D}$. Then
\formula[eq:dynkin]{
 \ex_x f(X({\tau_D})) & = \ex^x \int_0^{\tau_D} \int_{\X \setminus D} \nu(X_t, y) f(y) m(dy) dt
}
for $x \in D$.
\end{lemma}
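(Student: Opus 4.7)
The identity~\eqref{eq:dynkin} is the L\'evy system compensation formula for the Hunt process $X_t$, specialised to the exit time $\tau_D$ and a test function vanishing on $\overline{D}$. The strategy is to invoke the general compensation identity, which under Assumptions~\ref{asm:dual} and~\ref{asm:levy} identifies $\nu(x, y) m(dy) \, dt$ as the compensator of the jump measure of $X_t$, and then argue that both sides of that identity reduce, on a path-by-path basis, to the two sides of~\eqref{eq:dynkin}.

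\textbf{Key step.} Concretely, I would apply the L\'evy system identity
\formula{
 \ex_x \sum_{0 < t \le \tau} F(X_{t-}, X_t) \ind_{\{X_{t-} \ne X_t\}} & = \ex^x \int_0^\tau \int_\X F(X_t, y) \nu(X_t, y) m(dy) dt ,
}
valid for every non-negative Borel $F$ and every stopping time $\tau$, to $\tau = \tau_D$ and $F(x, y) = f(y) \ind_{\X \setminus D}(y)$. On the right-hand side, $X_t \in D$ for $t < \tau_D$ gives exactly $\ex^x \int_0^{\tau_D} \int_{\X \setminus D} \nu(X_t, y) f(y) m(dy) dt$. On the left-hand side, the factor $\ind_{\X \setminus D}(X_t)$ forces $X_t \notin D$, so every $t < \tau_D$ contributes zero and only the possible jump at $t = \tau_D$ can contribute. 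When the exit of $X_t$ from $D$ occurs by a jump, this single term equals $f(X(\tau_D))$; when the exit is continuous one has $X(\tau_D) \in \partial D \sub \overline{D}$ and the standing assumption $f \equiv 0$ on $\overline{D}$ gives $f(X(\tau_D)) = 0$, so the two sides still agree. In either case the left-hand side equals $\ex_x f(X(\tau_D))$, which is the desired identity.

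\textbf{Main obstacle.} The only substantial ingredient is the L\'evy system formula itself, whose derivation requires Hunt's theorem on additive functionals together with the identification of $\nu(x, y) m(dy)$ as the jump kernel of $X_t$. This is by no means automatic, but it is standard Hunt-process theory under Assumptions~\ref{asm:dual} and~\ref{asm:levy} and is carried out in detail around formula~(2.12) of~\cite{bib:bkk15}, which I would simply cite. Once the compensation formula is available, the remainder is pure bookkeeping: isolating the single surviving jump term and handling the continuous-exit event using the hypothesis that $f$ vanishes on the full closure $\overline{D}$ (rather than merely on $D$).
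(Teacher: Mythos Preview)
The paper does not give a proof of this lemma; it is simply quoted as formula~(2.12) in~\cite{bib:bkk15}. Your sketch via the L\'evy system compensation formula is correct and is exactly the standard derivation (and is how~\cite{bib:bkk15} obtains it), so there is nothing to compare.
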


Using the definition of $\tilde{f}_{8r,s}$ and~\eqref{eq:nu} to substitute $\nu(x_0, y)$ for $\nu(X_t, y)$ in~\eqref{eq:dynkin}, we have
\formula[eq:dynkin:est]{
 \tilde{f}_{8r,s}(x) & \approx \cnu(x_0, 8r, s) M_{s,\infty}(f) \ex_x \tau_{{8r}} .
}
Note that not only we have $M_{s,\infty}(f)$ instead of $M_{6r,\infty}(\tilde{f}_{8r,s})$, but also the constant $\cnu(x_0, 8r, s)$ tends to $1$ as $r \to 0^+$.

\begin{lemma}
\label{lem:ratio:f}
If $0 < 8 r \le s \le \tfrac{R}{4}$, then
\formula{
 \frac{f_{8r,s}(x)}{\tilde{f}_{8r,s}(x)} & \le \cbhi^4 \, \frac{M_{6r,s}(s_{{R/2}})}{1 + M_{s,R/4}(s_{{R/2}})}
}
for $x \in D_{2r}$. If $0 < 16 r \le s \le \tfrac{R}{24}$, then
\formula{
 \frac{f_{8r,s}(x)}{\tilde{f}_{8r,s}(x)} & \ge \cbhi^{-3} \cnu^{-1} \caux^{-3} \, \frac{M_{8r,s}(s_{{R/2}})}{1 + M_{s,R/4}(s_{{R/2}})}
}
for $x \in D_r$.
\end{lemma}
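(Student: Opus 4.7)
The strategy is to apply Theorem~\ref{thm:bhi} to both $f_{8r,s}$ and $\tilde{f}_{8r,s}$, which are non-negative regular harmonic functions in $D_{8r}$ vanishing in $B_{8r} \setminus D$; thus the BHI upper and lower estimates are available with radii $(2r, 4r, 6r, 8r)$ and constant bounded by $\cbhi$. The decomposition $f = f_{8r,s} + \tilde{f}_{8r,s}$ pins down the relevant exterior boundary values: $f_{8r,s}$ equals $f$ on $D_{8r,s}$, vanishes on $D_{s,\infty}$ and on $B_s \setminus D$, while $\tilde{f}_{8r,s}$ vanishes on $D_{8r,s}$ and on $B_s \setminus D$ and equals $f$ on $D_{s,\infty}$. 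In particular both functions are pointwise dominated by $f$ on $D_{8r}$, by non-negativity of the complementary piece.

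For the upper bound I apply the BHI upper estimate to $f_{8r,s}$ and the BHI lower estimate to $\tilde{f}_{8r,s}$; the common factor $\ex_x \tau_{4r}$ cancels, yielding $f_{8r,s}(x)/\tilde{f}_{8r,s}(x) \le \cbhi^2 \, M_{6r,\infty}(f_{8r,s})/M_{6r,\infty}(\tilde{f}_{8r,s})$ on $D_{2r}$. The boundary-value analysis above gives $M_{6r,\infty}(f_{8r,s}) \le M_{6r,s}(f)$ and $M_{6r,\infty}(\tilde{f}_{8r,s}) \ge M_{s,\infty}(f)$. I then use \eqref{eq:mf:ms} to replace $f$-masses by $s_{R/2}$-masses: $M_{6r,s}(f) \le \cbhi M_{3R/4,\infty}(f) M_{6r,s}(s_{R/2})$ for the numerator, and $M_{s,\infty}(f) = M_{s,R/4}(f) + M_{R/4,\infty}(f) \ge \cbhi^{-1} M_{3R/4,\infty}(f)(1 + M_{s,R/4}(s_{R/2}))$ for the denominator, the ``$1$'' arising from the trivial inequality $M_{R/4,\infty}(f) \ge M_{3R/4,\infty}(f)$. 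Dividing out $M_{3R/4,\infty}(f)$ gives the asserted bound with constant $\cbhi^4$.

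For the lower bound the BHI lower estimate combined with $M_{6r,\infty}(f_{8r,s}) \ge M_{8r,s}(f)$ yields $f_{8r,s}(x) \ge \cbhi^{-1} M_{8r,s}(f) \ex_x \tau_{4r}$. For $\tilde{f}_{8r,s}$ I invoke the Dynkin-type identity \eqref{eq:dynkin:est}: $\tilde{f}_{8r,s}(x) \le \cnu(x_0, 8r, s) M_{s,\infty}(f) \ex_x \tau_{8r}$. The crucial observation is that the hypothesis $s \ge 16r$, together with the monotonicity of $\cnu$ in its outer radius and condition~\ref{it:martin:3}, gives $\cnu(x_0, 8r, s) \le \cnu(x_0, 8r, 16r) \le \cnu$. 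Lemma~\ref{lem:sdr} supplies $\ex_x \tau_{4r}/\ex_x \tau_{8r} \ge \caux^{-1}$ for $x \in D_r$, so that $f_{8r,s}/\tilde{f}_{8r,s} \ge \cbhi^{-1} \cnu^{-1} \caux^{-1} M_{8r,s}(f)/M_{s,\infty}(f)$. The matching lower bound $M_{8r,s}(f) \ge \cbhi^{-1} M_{3R/4,\infty}(f) M_{8r,s}(s_{R/2})$ is again supplied by \eqref{eq:mf:ms}.

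The remaining and most delicate step is the matching upper bound $M_{s,\infty}(f) \le \cbhi \caux^2 M_{3R/4,\infty}(f)(1 + M_{s,R/4}(s_{R/2}))$. Its piece $M_{s,R/4}(f)$ is handled directly by \eqref{eq:mf:ms}, but the tail $M_{R/4,\infty}(f)$ is outside the range of that estimate and must be compared with $M_{3R/4,\infty}(f)$ separately. I would accomplish this by iterating Theorem~\ref{thm:bhi} at the two scales $R/4$ and $R/8$ and applying Lemma~\ref{lem:sdr} with $(4r, 8r) = (R/4, R/2)$ to convert between $s_{R/4}$ and $s_{R/2}$; this is the source of the factor $\caux^2$, and the hypothesis $s \le R/24$ provides the scale separation needed for the iteration. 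Collecting all constants yields $\cbhi^{-3} \cnu^{-1} \caux^{-3}$ in the lower bound. Controlling this tail in a scale-clean way is the main obstacle in the argument.
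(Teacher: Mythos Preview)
Your upper bound is exactly the paper's argument. For the lower bound you also follow the paper through the combination of Theorem~\ref{thm:bhi}, estimate~\eqref{eq:dynkin:est}, and Lemma~\ref{lem:sdr}; the only divergence is in the final step, where you keep $M_{3R/4,\infty}(f)$ as the reference mass and are then forced to prove the nontrivial comparison $M_{R/4,\infty}(f)\le C\,M_{3R/4,\infty}(f)$.

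The paper sidesteps this comparison altogether by a small but decisive trick: it applies the analogue of~\eqref{eq:mf:ms} with $R$ replaced by $R/3$ (that is, Theorem~\ref{thm:bhi} at scale $R/12$) directly to the \emph{numerator}, obtaining
\formula{
 M_{8r,s}(f) \ge \cbhi^{-1} M_{R/4,\infty}(f)\, M_{8r,s}(s_{R/6}) \ge \cbhi^{-1}\caux^{-2} M_{R/4,\infty}(f)\, M_{8r,s}(s_{R/2}),
}
where the second inequality is two applications of Lemma~\ref{lem:sdr}, and this is why the hypothesis $s\le R/24$ enters. With $M_{R/4,\infty}(f)$ now the reference, the denominator estimate is immediate via the \emph{trivial} inclusion $M_{3R/4,\infty}(f)\le M_{R/4,\infty}(f)$, giving $M_{s,\infty}(f)\le M_{R/4,\infty}(f)(1+\cbhi M_{s,R/4}(s_{R/2}))$ and the clean constant $\cbhi^{-3}\cnu^{-1}\caux^{-3}$.

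Your proposed route---comparing BHI at scales $R/4$ and $R/8$---does not quite hit the target: at scale $R/8$ the BHI yields $M_{3R/8,\infty}(f)$, not $M_{R/4,\infty}(f)$, and the inequality $M_{3R/8,\infty}(f)\le M_{R/4,\infty}(f)$ goes the wrong way. Comparing instead at scales $R/4$ and $R/12$ does give $M_{R/4,\infty}(f)\le \cbhi^2\caux^2 M_{3R/4,\infty}(f)$, but then your total constant becomes $\cbhi^{-4}\cnu^{-1}\caux^{-3}$ rather than the stated $\cbhi^{-3}\cnu^{-1}\caux^{-3}$. The paper's placement of the rescaled BHI in the numerator is what saves the extra power of $\cbhi$.
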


\begin{proof}
By Theorem~\ref{thm:bhi},
\formula{
 f_{8r,s}(x) & \le \cbhi M_{6r,\infty}(f_{8r,s}) \ex_x \tau_{{4r}} , \\
 \tilde{f}_{8r,s}(x) & \ge \cbhi^{-1} M_{6r,\infty}(\tilde{f}_{8r,s}) \ex_x \tau_{{4r}} .
}
Furthermore,
\formula{
 M_{6r,\infty}(f_{8r,s}) & = M_{6r,s}(f_{6r,s}) \le M_{6r,s}(f) , \\
 M_{6r,\infty}(\tilde{f}_{8r,s}) & \ge M_{s,\infty}(\tilde{f}_{8r,s}) = M_{s,\infty}(f) \ge M_{3R/4,\infty}(f) + M_{s,R/4}(f) .
}
Finally, by~\eqref{eq:mf:ms},
\formula{
 M_{6r,s}(f) & \le \cbhi M_{3R/4,\infty}(f) M_{6r,s}(s_{{R/2}}) , \\
 M_{s,R/4}(f) & \ge \cbhi^{-1} M_{3R/4,\infty}(f) M_{s,R/4}(s_{{R/2}}) .
}
We conclude that
\formula{
 \frac{f_{8r,s}(x)}{\tilde{f}_{8r,s}(x)} & \le \cbhi^4 \frac{M_{6r,s}(s_{{R/2}})}{1 + M_{s,R/4}(s_{{R/2}})} \, ,
}
which is the desired upper bound. The lower bound is proved in a somewhat more complicated way. By Theorem~\ref{thm:bhi} and estimate~\eqref{eq:dynkin:est},
\formula{
 f_{8r,s}(x) & \ge \cbhi^{-1} M_{6r,\infty}(f_{8r,s}) \ex_x \tau_{{4r}} , \\
 \tilde{f}_{8r,s}(x) & \le \cnu M_{s,\infty}(f) \ex_x \tau_{{8r}}
}
(we can write $\cnu = \cnu(x_0, 8 r, 16 r)$ in the second inequality because $s \ge 16 r$). By Lemma~\ref{lem:sdr}, $\ex_x \tau_{{8r}} \le \caux \ex_x \tau_{{4r}}$. Furthermore, by Theorem~\ref{thm:bhi} (as in~\eqref{eq:mf:ms}, but with $R$ replaced by $R/3$) and again Lemma~\ref{lem:sdr},
\formula{
 M_{6r,\infty}(f_{8r,s}) & = M_{6r,s}(f_{8r,s}) \ge M_{8r,s}(f_{8r,s}) = M_{8r,s}(f) \\
 & \ge \cbhi^{-1} M_{R/4,\infty}(f) M_{8r,s}(s_{{R/6}}) \\
 & \ge \cbhi^{-1} \caux^{-2} M_{R/4,\infty}(f) M_{8r,s}(s_{{2R/3}}) .
}
On the other hand, by~\eqref{eq:mf:ms},
\formula{
 M_{s,\infty}(f) & = M_{s,R/4}(f) + M_{R/4,\infty}(f) \\
 & \le \cbhi M_{3R/4,\infty}(f) M_{s,R/4}(s_{{R/2}}) + M_{R/4,\infty}(f) \\
 & \le M_{R/4,\infty}(f) (1 + \cbhi M_{s,R/4}(s_{{R/2}})) .
}
We conclude that
\formula{
 \frac{f_{8r,s}(x)}{\tilde{f}_{8r,s}(x)} & \ge \cbhi^{-3} \cnu^{-1} \caux^{-3} \, \frac{M_{8r,s}(s_{{R/2}})}{1 + M_{s,R/4}(s_{{R/2}})} \, ,
}
as desired.
\end{proof}

\subsection{Inaccessible boundary points}
\label{sec:inacc}

Throughout this part we assume that $x_0$ is \emph{inaccessible}, that is,
\formula{
 M_{0,\infty}(s_{R}) = \int_{D_R} \ex_y \tau_{R} \, \nu(x_0, y) m(dy) < \infty .
}
In this case $f_{8r,s}$ and $g_{8r,s}$ turn out to be negligible compared to $\tilde{f}_{8r,s}$ and $\tilde{g}_{8r,s}$ for sufficiently small $r$ and $s$.

Clearly, $M_{0,\infty}(s_{{R/2}}) \le M_{0,\infty}(s_{R}) < \infty$. We remark that by~\eqref{eq:mf:ms},
\formula{
 M_{0,\infty}(f) & = M_{0,R/4}(f) + M_{R/4,\infty}(f) \\
 & \le \cbhi M_{3R/4,\infty}(f) M_{0,R/4}(s_{{R/2}}) + M_{R/4,\infty}(f) < \infty ,
}
and $M_{0,\infty}(g) < \infty$ by the same argument, and hence one can pass to the limit separately in the numerator and the denominator of~\eqref{eq:thm:martin}.

Let $0 < \eps < 1$. By the upper bound in Lemma~\ref{lem:ratio:f}, there is $s = s(\eps) \le \eps R$ such that if $0 < 8r \le s$, then
\formula[eq:ftilde_dominates_f]{
 f_{8r,s}(x) & \le \eps \tilde{f}_{8r,s}(x) , & g_{8r,s}(x) & \le \eps \tilde{g}_{8r,s}(x)
}
for $x \in D_{2r}$. Furthermore, estimate~\eqref{eq:dynkin:est} and assumption \smash{$\lim\limits_{r \to 0^+} \cnu(x_0, r, R) = 1$} imply that there is $r = r(\eps) \le s/8$ such that
\formula[eq:little_clevy]{
 \tilde{f}_{8r,s}(x) & \approx (1 + \eps) \ex_x \tau_{{8r}} M_{s,\infty}(f) , & \tilde{g}_{8r,s}(x) & \approx (1 + \eps) \ex_x \tau_{{8r}} M_{s,\infty}(g)
}
for $x \in D_{8r}$. It follows that
\formula{
 \frac{f(x)}{g(x)} & \le \frac{(1 + \eps) \tilde{f}_{8r,s}(x)}{\tilde{g}_{8r,s}(x)} \le (1 + \eps)^3 \frac{M_{s,\infty}(f)}{M_{s,\infty}(g)}
}
for $x \in D_{2r}$. The lower bound is proved in a similar manner, and we obtain
\formula[eq:accessible_points_inequality]{
(1 + \eps)^{-3} \frac{M_{s,\infty}(f)}{M_{s,\infty}(g)} \le \frac{f(x)}{g(x)} & \le (1 + \eps)^3 \frac{M_{s,\infty}(f)}{M_{s,\infty}(g)}
} 
for $x \in D_{2r}$. Since $\eps$ was arbitrary and $s$ converges to $0$ as $\eps \to 0^+$, we have
\formula{
 \lim_{\substack{x \to x_0 \\ x \in D}} \frac{f(x)}{g(x)} & = \lim_{s \to 0^+} \frac{M_{s,\infty}(f)}{M_{s,\infty}(g)} \, ,
}
and Theorem~\ref{thm:martin} for inaccessible boundary points is proved.

\subsection{Accessible boundary points}
\label{sec:acc}

In the second part of the proof we assume that $x_0$ is \emph{accessible}, that is,
\formula{
 M_{0,\infty}(s_{R}) & = \int_{D_R} \ex_y \tau_{R} \nu(x_0, y) m(dy) = \infty .
}
In this case $f_{8r,s}$ and $g_{8r,s}$ dominate $\tilde{f}_{8r,s}$ and $\tilde{g}_{8r,s}$ for all sufficiently small $r$.

We remark that by~\eqref{eq:mf:ms} and Lemma~\ref{lem:sdr},
\formula{
 M_{0,\infty}(f) & \ge M_{0,R/4}(f) \ge \cbhi^{-1} M_{3R/4,\infty}(f) M_{0,R/4}(s_{{R/2}}) = \infty ,
}
and $M_{0,\infty}(g) = \infty$ by the same argument. In other words, the numerator and the denominator of the right-hand side of~\eqref{eq:thm:martin} diverge to infinity as $r \to 0^+$. In particular, if the limit of $f(x) / g(x)$ in~\eqref{eq:thm:martin} exists, then it is automatically equal to the right-hand side.

Our argument is based on the following standard oscillation reduction lemma. 

\begin{lemma}
\label{lem:ro}
If $0 < 8 r < s < R_0$, then
\formula{
 \expr{\sup_{y \in D_{2r}} - \inf_{y \in D_{2r}}} \frac{f_{8r,s}(y)}{g_{8r,s}(y)} & \le \frac{\cbhi^4 - 1}{\cbhi^4 + 1} \expr{\sup_{y \in D_s} - \inf_{y \in D_s}} \frac{f(y)}{g(y)} .
}
\end{lemma}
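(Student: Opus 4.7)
The plan is to carry out the standard two-sided oscillation reduction argument driven by the boundary Harnack inequality~\eqref{eq:bhi}. Abbreviating $M = \sup_{y \in D_s} f(y)/g(y)$, $m = \inf_{y \in D_s} f(y)/g(y)$, and defining $M', m'$ analogously for $f_{8r,s}/g_{8r,s}$ on $D_{2r}$, the target inequality becomes $M' - m' \le \tfrac{\cbhi^4 - 1}{\cbhi^4 + 1}(M - m)$.

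The key step is to recognise that the auxiliary functions $u_1 = f_{8r,s} - m\, g_{8r,s}$ and $u_2 = M\, g_{8r,s} - f_{8r,s}$ are non-negative on all of $\X$, regular harmonic in $D_{8r}$, and vanish in $B_{8r} \setminus D$, so that~\eqref{eq:bhi} with radii $r_1 = 2r$, $r_2 = 4r$, $r_3 = 6r$, $r_6 = 8r$ applies to each pair $(u_i, g_{8r,s})$. The only point requiring justification is non-negativity: writing
\formula{
 u_1(x) & = \ex_x\bigl[(f - m g)(X(\tau_{8r}))\, \ind_{D_{8r,s}}(X(\tau_{8r}))\bigr] ,
}
the indicator forces the integrand to be evaluated at points of $D_{8r,s} \sub D_s$, where $f \ge m g$ by the very definition of $m$; the same reasoning yields $u_2 \ge 0$. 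Regular harmonicity in $D_{8r}$ and vanishing on $B_{8r} \setminus D$ are inherited directly from $f_{8r,s}$ and $g_{8r,s}$.

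Applying~\eqref{eq:bhi} then gives $M' - m \le \cbhi^4 (m' - m)$ and $M - m' \le \cbhi^4 (M - M')$; adding these two and collecting the $M' - m'$ and $M - m$ terms on opposite sides produces $(1 + \cbhi^4)(M' - m') \le (\cbhi^4 - 1)(M - m)$, which is the claim. The substantive content of the argument lies entirely in the non-negativity of $u_1$ and $u_2$, which in turn rests on the \emph{near-boundary} truncation $\ind_{D_{8r,s}}$ appearing in $f_{8r,s}$: this truncation restricts the relevant jump targets to a region on which one has a global pointwise comparison between $f$ and $g$. Once this observation is made the rest is a direct application of~\eqref{eq:bhi} followed by elementary algebra, and I anticipate no further obstacles.
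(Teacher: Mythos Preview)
Your proof is correct and follows essentially the same approach as the paper: the paper likewise observes that $a\,g_{8r,s} \le f_{8r,s} \le A\,g_{8r,s}$ (your $m$ and $M$) everywhere, applies~\eqref{eq:bhi} to the non-negative pairs $(f_{8r,s} - a\,g_{8r,s},\, g_{8r,s})$ and $(A\,g_{8r,s} - f_{8r,s},\, g_{8r,s})$, and concludes by the same addition-and-rearrangement step. Your justification of non-negativity via the expectation representation is a slightly more explicit rendering of the paper's one-line observation $a g\,\ind_{8r,s} \le f\,\ind_{8r,s} \le A g\,\ind_{8r,s}$, but the content is identical.
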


\begin{proof}
For simplicity, we denote
\formula{
A & = \sup_{y \in D_s} \frac{f(y)}{g(y)} \, , & B & = \sup_{y \in D_{2r}} \frac{f_{8r,s}(y)}{g_{8r,s}(y)} \, , \\
a & = \inf_{y \in D_s} \frac{f(y)}{g(y)} \, , & b & = \inf_{y \in D_{2r}} \frac{f_{8r,s}(y)}{g_{8r,s}(y)} \, .
}
Since
\formula{
 a g \ind_{{8r,s}} & \le f \ind_{{8r,s}} \le A g \ind_{{8r,s}} ,
}
we clearly have
\formula[eq:ro:0]{
  a g_{8r,s} & \le f_{8r,s} \le A g_{8r,s} .
}
In particular, $a \le b \le B \le A$, and Theorem~\ref{thm:bhi} applies to \emph{everywhere} non-negative functions $f_{8r,s} - a g_{8r,s}$, $A g_{8r,s} - f_{8r,s}$ and $g_{8r,s}$ (note that $f - a g$ and $A g - f$ typically fail to be non-negative everywhere). By~\eqref{eq:bhi},
\formula{
 \sup_{y \in D_{2r}} \frac{f_{8r,s}(y) - a g_{8r,s}(y)}{g_{8r,s}(y)} & \le \cbhi^4 \inf_{y \in D_{2r}} \frac{f_{8r,s}(y) - a g_{8r,s}(y)}{g_{8r,s}(y)} \, , \\
 \sup_{y \in D_{2r}} \frac{A g_{8r,s}(y) - f_{8r,s}(y)}{g_{8r,s}(y)} & \le \cbhi^4 \inf_{y \in D_{2r}} \frac{A g_{8r,s}(y) - f_{8r,s}(y)}{g_{8r,s}(y)} \, .
}
This translates to $B - a \le \cbhi^4 (b - a)$ and $A - b \le \cbhi^4 (A - B)$, and adding the sides of these inequalities leads to the desired inequality
\formula{
 (\cbhi^4 + 1) (B - b) & \le (\cbhi^4 - 1) (A - a).
\qedhere
}
\end{proof}

For continuous processes (in sufficiently regular domains), the above lemma easily yields the assertion of Theorem~\ref{thm:martin}. For processes with jumps one needs to incorporate the non-local parts $\tilde{f}_{8r,s}$ and $\tilde{g}_{8r,s}$ using Lemma~\ref{lem:ratio:f}. As it was remarked in the introduction, this modification was developed in~\cite{bib:b99}, and extended in~\cite{bib:bkk08}.

Let $0 < \eps < 1$ and $0 < s < \tfrac{R}{24}$. By the lower bound in Lemma~\ref{lem:ratio:f}, there is $r = r(\eps, s) \le \tfrac{s}{8}$ such that
\formula[eq:f_dominates_ftilde]{
 \tilde{f}_{8r,s}(x) & \le \eps f_{8r,s}(x) , & \tilde{g}_{8r,s}(x) & \le \eps g_{8r,s}(x)
}
for $x \in D_r$. It follows that
\formula{
 \expr{\sup_{x \in D_r} - \inf_{x \in D_r}} \frac{f(x)}{g(x)} & \le 
 (1 + \eps) \sup_{x \in D_r} \frac{f_{8r,s}(x)}{g_{8r,s}(x)} - \frac{1}{1 + \eps} \inf_{x \in D_r} \frac{f_{8r,s}(x)}{g_{8r,s}(x)} \, .
}
By Lemma~\ref{lem:ro} and the inequality $1 - (1 + \eps)^{-1} \le \eps$,
\formula[eq:supremums]{
 \expr{\sup_{x \in D_r} - \inf_{x \in D_r}} \frac{f(x)}{g(x)} & \le 
 \frac{\cbhi^4 - 1}{\cbhi^4 + 1} \expr{\sup_{x \in D_s} - \inf_{x \in D_s}} \frac{f(x)}{g(x)} + \eps \expr{\sup_{x \in D_r} + \inf_{x \in D_r}} \frac{f_{8r,s}(x)}{g_{8r,s}(x)} \, .
}
Denote by $Q$ the upper limit of the expression in the left-hand side as $r \to 0^+$. Using~\eqref{eq:ro:0} and taking the upper limit of both sides as $s \to 0^+$ leads to
\formula{
 Q & \le \frac{\cbhi^4 - 1}{\cbhi^4 + 1} \, Q + 2 \eps \sup_{x \in D_{R/4}} \frac{f(x)}{g(x)} \, ,
}
that is,
\formula{
 Q & \le \eps (1 + \cbhi^4) \sup_{x \in D_{R/4}} \frac{f(x)}{g(x)} \, .
}
Since $\eps$ is arbitrary, we conclude that $Q = 0$, and the proof of Theorem~\ref{thm:martin} is complete.

%
%

\subsection{Extensions}
\label{subsec:ex}

We first prove the statement contained in Remark~\ref{rem:exit}. Denote $g(x) = \ex_x \tau_{R}$. Then $g$ is not a regular harmonic function in $D_R$, but for every open $U \sub D_R$,
\formula{
 g(x) & = \ex_x \tau_U + \ex_x g(X({\tau_U})) .
}
We interpret $\ex_x \tau_U$ as if it originated from a jump to a distant point (a point at infinity), and we define
\formula{
 M_{r,\infty}(g) & = 1 + \int_{\X \setminus B_r} g(y) \nu(x_0, y) m(dy) , & \tilde{g}_{r,s}(x) & = \ex_x \tau_{r} + \ex_x((g \ind_{{s,\infty}})(X(\tau_{r}))) ;
}
the definitions of $M_{r,s}(g)$ and $g_{r,s}(x)$ for finite $s$ remain unaltered. One can then follow carefully the proof of Theorem~\ref{thm:martin} and see that no changes are required. This shows validity of Remark~\ref{rem:exit}.

In the remaining part of this section we argue that an extension stated in Remark~\ref{rem:uniform} is true: the limit in Theorem~\ref{thm:martin} converges uniformly in $f$ and $g$, and also in $D$, in the sense of~\eqref{eq:uniform}.

We claim that if $0 < q < R_0$ and $\eta > 0$, then there is $p$, which depends only on $q$, $\eta$ and the characteristics of the process $X_t$, such that $0 < p < q$ and
\formula[eq:ro:reduction]{
\frac{\sup_{x \in D_p} (f(x) / g(x))}{\inf_{x \in D_p} (f(x) / g(x))} - 1 & \le \eta + \frac{\cbhi^4 - 1}{\cbhi^4 + 1} \expr{\frac{\sup_{x \in D_q} (f(x) / g(x))}{\inf_{x \in D_q} (f(x) / g(x))} - 1} 
}
for all open sets $D$ and all functions $f$ and $g$ as in Theorem~\ref{thm:martin} (this estimate is very similar to~\eqref{eq:supremums}). By considering the supremum of both sides of~\eqref{eq:ro:reduction} over all $f$, $g$ and $D$, and then taking the upper limit as $q \to 0^+$, we obtain the desired result:
\formula{
 \limsup_{r \to 0^+} \sup_{D,f,g} \expr{\frac{\sup_{x \in D_r} (f(x) / g(x))}{\inf_{x \in D_r} (f(x) / g(x))} - 1} & \le \frac{\eta (1 + \cbhi^4)}{2}
}
for arbitrary $\eta > 0$. Therefore, it remains to prove~\eqref{eq:ro:reduction}.

Let $0 < q < \tfrac{1}{24} R_0$ and $\eta > 0$. We consider two additional parameters $\delta, N > 0$; the actual values of $\delta$ (small real) and $N$ (large integer) are to be specified at the end of the argument. By the assumption $\lim_{r \to 0^+} \cnu(x_0, r, R) = 1$ one can construct a decreasing sequence of radii $a_0, a_1, \dots, a_N$ so that $a_0$ is the input radius $q$, $\tfrac{1}{8} a_N$ will be the output radius $p$, and we have $16 a_{n+1} < a_n$ and $\cnu(x_0, 8 a_{n+1}, a_n) \le 1 + \delta$ for all $n = 0, 1, \dots, N - 1$.

Following~\cite{bib:bkk08}, we consider two scenarios. Suppose first that for some $n$ we have
\formula[eq:lucky]{
 M_{a_{n+1},a_n}(s_{R/2}) & \le \delta (1 + M_{a_n,R/4}(s_{R/2})) .
}
Then the argument is fairly simple: as in Section~\ref{sec:inacc}, by Lemma~\ref{lem:ratio:f} we have the inequality~\eqref{eq:ftilde_dominates_f} with $r = a_{n+1}$, $s = a_n$ and $\eps = \cbhi^4 \delta$. Since $\cnu(x_0, 8 a_{n+1}, a_n) \le 1 + \delta$, the estimate~\eqref{eq:little_clevy} holds with $r = a_{n+1}$, $s = a_n$ and $\eps = \delta$. This implies~\eqref{eq:accessible_points_inequality} (with $s = a_n$, $x \in D_{2 a_{n+1}}$ and $\eps = \cbhi^4 \delta$), and in particular the left-hand side of~\eqref{eq:ro:reduction} does not exceed $(1 + \cbhi^4 \delta)^6 - 1$. Estimate~\eqref{eq:ro:reduction} follows with $p = a_{n+1}$, provided that $(1 + \cbhi^4 \delta)^6 - 1 \le \eta$. We choose $\delta$ small enough, so that this inequality is satisfied.

In the other scenario, for each $n$ the converse of~\eqref{eq:lucky} holds. Summing up these inequalities for $n = 0, 1, \dots, N - 1$ we obtain
\formula{
 M_{a_N,a_0}(s_{R/2}) & \ge N \delta (1 + M_{a_0,R/4}(s_{R/2})) ,
}
and we argue as in Section~\ref{sec:acc}. Again by Lemma~\ref{lem:ratio:f}, we have~\eqref{eq:f_dominates_ftilde} with $r = \tfrac{1}{8} a_N$, $s = a_0$ and $\eps = \cbhi^3 \cnu \caux^3 (N \delta)^{-1}$. Inequality~\eqref{eq:supremums} follows. Dividing both sides of it by $\inf_{x \in D_r} (f(x) / g(x))$ and using monotonicity of this expression in $r$, we obtain~\eqref{eq:ro:reduction} for $p = \tfrac{1}{8} a_N$, provided that $\eps (\cbhi + 1) \le \eta$. Since $\delta$ is now fixed, we may choose $N$ large enough, so that this condition is satisfied. This completes the proof of the extension described in Remark~\ref{rem:uniform}.

%
%

\subsection{Martin representation}
\label{subsec:repr}

In this section we prove Theorem~\ref{thm:repr}. We assume that the assumptions of Theorem~\ref{thm:martin} are satisfied in a uniform way for all $x_0 \in \overline{D}$.

We note one important property of the Green function: if $U$ is an open subset of $D$, then
\formula[eq:green]{
 G_D(x, y) = \ex_x G_D(X(\tau_U), y) + G_U(x, y)
}
(where, as usual, we assume that $G_U(x, y) = 0$ whenever $x \notin U$ or $y \notin U$). In particular, $G_D(x, y)$ is a regular harmonic function in $D \setminus \overline{B}(y, r)$ for every $r > 0$. By a duality argument, $G_D(x, y)$ is a regular \emph{co-harmonic} function in $D \setminus \overline{B}(x, r)$ for every $r > 0$.

\begin{proof}[Proof of Theorem~\ref{thm:repr}\ref{it:repr:1}]
The assumptions are completely symmetric under duality, and hence we may apply Theorem~\ref{thm:martin} to both harmonic and co-harmonic functions. In particular, as already remarked before the statement of Theorem~\ref{thm:repr}, the Martin kernel, defined as the boundary limit of co-harmonic functions
\formula{
 M_D(x, z) & = \lim_{\substack{y \to z \\ x \in D}} \frac{G_D(x, y)}{G_D(\tilde{x}, y)} \, ,
}
exists for all boundary points $z \in D$ (here and below $\tilde{x} \in D$ is a fixed reference point). In other words, the Martin boundary coincides with the Euclidean boundary.
\end{proof}

The representation given in part~\ref{it:repr:4} essentially follows now from the general theory of Martin boundary, together with some ideas developed in~\cite{bib:bkk08}. For simplicity, in the remaining part of the proof we simply write that a function is harmonic when we refer to harmonicity in $D$.

\begin{proof}[Proof of Theorem~\ref{thm:repr}\ref{it:repr:2}]
Following the proof of Theorem~2 in~\cite{bib:bkk08}, we find that $M_D(x, x_0)$ is a harmonic function with respect to $x$ if and only if $x_0$ is accessible. Indeed, for an inaccessible boundary point $x_0$ we have, by~\eqref{eq:thm:martin} in Theorem~\ref{thm:martin},
\formula{
 M_D(x, x_0) & = C \int_D \nu(y, x_0) G_D(x, y) m(dy)
}
for $C = (\int_D \nu(y, x_0) G_D(\tilde{x}, y) m(dy))^{-1} > 0$, and so the Martin kernel is not harmonic (to see this, simply use~\eqref{eq:green} and Fubini). On the other hand, if $x_0$ is accessible and $R > 0$, then
\formula[eq:martin:harm]{
 \ex_x M_D(X(\tau_{D \setminus \overline{B}(x_0, R)}), x_0) & = \ex_x \lim_{\substack{y \to x_0 \\ y \in D}} \frac{G_D(X(\tau_{D \setminus \overline{B}(x_0, R)}), y)}{G_D(\tilde{x}, y)} .
}
Recall that $G_D(x, y)$ is a regular harmonic function of $x \in D \setminus \overline{B}(x_0, R)$ when $y \in B(x_0, R)$. By Fatou's lemma,
\formula[eq:martin:subharm]{
 \ex_x M_D(X(\tau_{D \setminus \overline{B}(x_0, R)}), x_0) & \le M_D(x, x_0) ,
}
and we claim that in fact equality holds, that is, we can exchange the limit with the expectation in~\eqref{eq:martin:harm}. By Vitali's convergence theorem, it suffices to prove that that the ratio in the right-hand side of~\eqref{eq:martin:harm} is a uniformly integrable family of random variables for $y \in D \cap B(x_0, r)$ for some $r > 0$. The argument is exactly the same as in the proof of formula~(77) in~\cite{bib:bkk08}; for the convenience of the reader, we repeat it below.

Assume that $0 < 8 r < R$ and that $x, \tilde{x} \notin D \cap B(x_0, R)$. We will first prove that
\formula[eq:martin:supest]{
 \sup_{\substack{y \in D \cap B(x_0, r) \\ z \in D \setminus B(x_0, 4 r)}} \frac{G_D(z, y)}{G_D(\tilde{x}, y)} < \infty .
}
By the boundary Harnack inequality (Theorem~\ref{thm:bhi}) applied to $G_D(z, \cdot)$ and $G_D(\tilde{x}, \cdot)$, it suffices to consider a fixed $y \in D \cap B(x_0, r)$, that is, to show that $G_D(\cdot, y)$ is bounded in $D \setminus B(x_0, 4 r)$. This is relatively simple, but somewhat technical. Denote $D_1 = D \cap B(x_0, r)$, $D_2 = D \cap B(x_0, r)$, $D_4 = D \cap \overline{B}(x_0, 4 r)$ and $D' = D \setminus \overline{B}(x_0, 4 r)$. By Dynkin's formula~\eqref{eq:dynkin},
\formula{
 & \ex_z \bigl(G_D(X(\tau_{D'}), y) \ind_{D_2}(X(\tau_{D'}))\bigr) & \\
 & \hspace*{3em} \le \biggl(\sup_{\substack{v \in D' \\ w \in D_2}} \nu(v, w)\biggr) \int_{D'} G_{D'}(z, v) m(dv) \int_{D_2} G_D(w, y) m(dw) .
}
The supremum is finite by Assumption~\ref{asm:levy} and boundedness of $D$, and the integrals in the right-hand side are bounded by $\sup_{u \in D} \ex_u \tau_D$ and $\sup_{u \in D} \hat{\ex}_u \hat{\tau}_D$, respectively. Furthermore,
\formula{
 \ex_z \bigl(G_D(X(\tau_{D'}), y) \ind_{D_4 \setminus D_2}(X(\tau_{D'}))\bigr) & \le \sup_{\substack{v \in D_4 \setminus D_2 \\ w \in D_1}} G_D(v, w) ,
}
and the right-hand side is finite by Assumption~\ref{asm:green}. By adding the sides of these two bounds and using harmonicity of the Green function, we complete the proof of~\eqref{eq:martin:supest}.

On the other hand, if we denote $D'' = D \setminus \overline{B}(x_0, 8 r)$ and $D''' = D \setminus \overline{B}(x_0, R)$, then, again by Lemma~\ref{lem:dynkin},
\formula{
 & \ex_x \bigl(G_D(X(\tau_{D'''}), y) \ind_{D_4}(X(\tau_{D'''}))\bigr) \\
 & \hspace*{3em} \le \cnu \expr{\int_{D'''} \nu(x_0, v) G_{D'''}(x, v) m(dv)} \expr{\int_{D_4} G_D(w, y) m(dw)} ,
}
and, in a similar way,
\formula{
 G_D(\tilde{x}, y) & \ge \ex_{\tilde{x}} \bigl(G_D(X(\tau_{D''}), y) \ind_{D_4}(X(\tau_{D''}))\bigr) \\
 & \ge \cnu^{-1} \expr{\int_{D''} \nu(x_0, v) G_{D''}(\tilde{x}, v) m(dv)} \expr{\int_{D_4} G_D(w, y) m(dw)} .
}
It follows that
\formula{
 & \ex_x \expr{ \frac{G_D(X(\tau_{D \setminus \overline{B}(x_0, R)}), y)}{G_D(\tilde{x}, y)} \, \ind_{D \cap B(x_0, 4 r)}(X(\tau_{D \setminus B(x_0, R)}))} \\
 & \hspace*{11em} \le C \expr{\int_{D \setminus B(x_0, 8 r)} \nu(x_0, v) G_{D \setminus B(x_0, 8 r)}(\tilde{x}, v) m(dv)}^{-1} ,
}
where $C$ does not depend on (sufficiently small) $r > 0$ and $y \in D \cap B(x_0, r)$. Recall that $G_{D \setminus B(x_0, 8 r)}(\tilde{x}, v)$ increases to $G_D(\tilde{x}, v)$ (because the corresponding exit times $\tau_{D \setminus B(x_0, 8 r)}$ increase to $\tau_D$). By monotone convergence, the right-hand side converges to zero as $r \to 0^+$. Together with~\eqref{eq:martin:supest}, this completes the proof of uniform integrability of the right-hand side of~\eqref{eq:martin:harm}.

Part~\ref{it:repr:2} follows, and in addition we see that for accessible boundary points $z$, the Martin kernel $M_D(x, z)$ is a regular harmonic function in $D \setminus B(z, r)$ for every $r > 0$.
\end{proof}

In order to apply the general theory of Martin boundary, we need to prove that the Green operator, which maps a measurable function $f(x)$ to $G_D f(x) = \int_D G_D(x, y) f(y) m(dy)$, takes bounded functions into continuous ones. Let $f$ be a bounded function on $D$, $x_0 \in D$ and $\eps > 0$. Clearly, $|G_D f(x)| \le \|f\| \ex_x \tau_D$ for $x \in D$, so that $G_D f$ is bounded. Let $r > 0$ be small enough, so that $\ex_x \tau_{B(x_0, r)} < \eps$ for $x \in B(x_0, r)$. By~\eqref{eq:green},
\formula{
 G_D f(x) & = \ex_x G_D f(X(\tau_{B(x_0, r)})) + G_{B(x_0, r)} f(x) .
}
The first term is continuous in $B(x_0, r)$ by Theorem~\ref{thm:martin} (see Remark~\ref{rem:continuity}). The other one is bounded by $\eps \|f\|$, an arbitrarily small number. Therefore, $G_D f$ is continuous at $x_0$.

The general theory of Martin boundary tells us now that if $f$ satisfies the assumptions of Theorem~\ref{thm:repr} and $f$ is equal to zero in the complement of $D$, then
\formula[eq:repr:m]{
 f(x) & = \int_{\partial_m D} M_D(x, z) \mu(dz)
}
for some measure $\mu$ on the set of accessible boundary points $\partial_m D$, see Theorem~14.8 in~\cite{bib:cw05}. Furthermore, if we show that for every $z \in \partial_m D$, $M_D(x, z)$ is a \emph{minimal} harmonic function with respect to $x$, then the measure $\mu$ in the above representation is unique. Minimality of $M_D(x, z)$ is proved as in the final part of the proof of Lemma~14 in~\cite{bib:bkk08}.

\begin{proof}[Proof of Theorem~\ref{thm:repr}\ref{it:repr:3}]
Suppose that $f$ is harmonic, $0 \le f(x) \le M_D(x, x_0)$ for all $x \in \X$ (in particular, $f(x) = 0$ for $x \in \X \setminus D$) and that the measure $\mu$ in representation~\eqref{eq:repr:m} is zero on $\partial_m D \cap B(x_0, 4 r)$ for some $r > 0$. Our goal is to prove that $f$ is identically zero. This will imply that if $f$ is harmonic and $0 \le f(x) \le M_D(x, x_0)$ for all $x \in \X$, then the measure $\mu$ in representation~\eqref{eq:repr:m} is concentrated in $\{x_0\}$, and thus $M_D(x, x_0)$ is a minimal harmonic function.

For every $z \in \partial_m D \setminus B(x_0, 4 r)$, $M_D(x, z)$ is a regular harmonic function in $D \cap B(x_0, 3 r)$. Hence, by Fubini, $f$ also has this property. Furthermore, by the boundary Harnack inequality (Theorem~\ref{thm:bhi}), $f$ is bounded on $D \cap B(x_0, 2 r)$.

On the other hand, since $f(x) \le M_D(x, x_0)$, one easily finds that $f$ is also a regular harmonic function in $D \setminus B(x_0, r)$. This is exactly the same argument as in Lemma~9 in~\cite{bib:bkk08}; for the convenience of the reader, we provide the details at the end of this section. In particular, since $f$ is bounded in $D \cap B(x_0, 2 r)$, it is bounded on $D$.

A sweeping argument, which is a simplified version of Lemma~10 in~\cite{bib:bkk08}, proves then that $f$ is a regular harmonic function in $D$: Let $\sigma_n$ be the sequence of consecutive exit times from alternately $D \cap B(x_0, 4 r)$ and $D \setminus B(x_0, r)$. That is, $\sigma_0 = 0$ and $\sigma_{n+1} = \sigma_n + \tau_V \circ \thet_{\sigma_n}$, where $V = D \cap B(x_0, 4 r)$ when $n$ is even and $V = D \setminus B(x_0, r)$ when $n$ is odd (and $\thet_\tau$ is the shift operator).

Clearly, $\sigma_n \le \tau_D < \infty$. Since $\sigma_n$ is increasing, by quasi-left continuity, $X(\sigma_n)$ has a limit as $n \to \infty$. Therefore, it is impossible that $\sigma_n < \tau_D$ for infinitely many $n$. It follows that with probability one, eventually $\sigma_n = \tau_D$.

Since $f(x) = \ex_x f(X(\sigma_n))$ and $f$ is bounded, by dominated convergence we have $f(x) = \ex_x f(X(\tau_D)) = 0$, as desired.
\end{proof}

We have thus proved the representation~\eqref{eq:repr:m} for harmonic functions $f$ which are zero in the complement of $D$. The general case is handled as in Lemma~13 in~\cite{bib:bkk08}. 

\begin{proof}[Proof of Theorem~\ref{thm:repr}\ref{it:repr:4}]
Let $D_n$ be an ascending sequence of open sets such that $\overline{D_n} \sub D$ and $\bigcup_{n = 1}^\infty D_n = D$. Then, by Lemma~\ref{lem:dynkin},
\formula{
 f(x) & = \ex_x f(X(\tau_{D_n})) \ge \int_{\X \setminus D} \expr{\int_{D_n} G_{D_n}(x, y) \nu(y, z) m(dy)} f(z) m(dz) .
}
The integrand in the right-hand side increases as $n \to \infty$, and therefore by monotone convergence,
\formula[eq:repr:p]{
 f(x) & \ge \int_{\X \setminus D} \expr{\int_D G_D(x, y) \nu(y, z) m(dy)} f(z) m(dz) .
}
Let $g(x)$ be equal to the right-hand side of~\eqref{eq:repr:p} for $x \in D$, and to $f(x)$ for $x \in \X \setminus D$. From Lemma~\ref{lem:dynkin} and the property~\eqref{eq:green} of the Green function it follows easily that $g$ is harmonic: if $U$ is open and $\overline{U} \sub D$, then
\formula{
 \ex_x g(X(\tau_U)) & = \ex_x (f \ind_{\X \setminus D})(X(\tau_U)) + \ex_x \int_{\X \setminus D} \expr{\int_D G_D(X(\tau_U), y) \nu(y, z) m(dy)} f(z) m(dz) \\
 & = \int_{\X \setminus D} \expr{\int_D \bigl(G_U(x, y) + \ex_x G_D(X(\tau_U), y)\bigr) \nu(y, z) m(dy)} f(z) m(dz) = g(x) .
}
Therefore, $f - g$ is a non-negative harmonic function which is equal to zero in $\X \setminus D$, and so it has a unique representation~\eqref{eq:repr:m}.

Finally, the outer integral in~\eqref{eq:repr:p} is finite, and so points at which the inner integral is infinite cannot contribute to the integral. It follows that we can change the outer integral to an integral over $\X \setminus (D \cup \partial_m D)$. The proof of~\eqref{eq:repr} is complete.
\end{proof}

\begin{proof}[Proof of Theorem~\ref{thm:repr}\ref{it:repr:5}]
By the boundary Harnack inequality, if the right-hand side of~\eqref{eq:repr} is finite at some $x \in D$, it is finite everywhere in $D$. Indeed, let $f$ be given by~\eqref{eq:repr}. If $f(x) = \infty$ for some $x \in D$, by Theorem~\ref{thm:bhi} $f$ is infinite at every point of a ball $B(x, r)$ contained in $D$. If $y \in D$, then again using Theorem~\ref{thm:bhi} (for a ball centred at $y$), $f$ is infinite at $y$.

Finally, harmonicity of the right-hand side of~\eqref{eq:repr}, whenever it is finite, follows from Lemma~\ref{lem:dynkin}, property~\eqref{eq:green} of the Green function, and harmonicity of the Martin kernel.
\end{proof}

At the end of this section, we present the proof of Lemma~9 in~\cite{bib:bkk08}, adapted to our setting. This result was used in the proof of Theorem~\ref{thm:repr}\ref{it:repr:3}.

\begin{lemma}[{Lemma~9 in~\cite{bib:bkk08}}]
Let $U$ and $D$ be open subsets of $\X$ such that $U \sub D$. If $0 \le f(x) \le g(x)$ for all $x \in \X$, $f$ and $g$ are harmonic in $D$, $g$ is a regular harmonic function in $U$ and $g(x) = 0$ for $x \in \X \setminus D$, then $f$ is a regular harmonic function in $U$.
\end{lemma}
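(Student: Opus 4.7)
The plan is to approximate $U$ from inside and pass to the limit in the mean-value property of $f$ on successively larger sets. I would take an increasing sequence of open sets $U_n$ with $\overline{U_n}$ compact, $\overline{U_n} \sub U$, and $\bigcup_n U_n = U$, and write $\tau_n = \tau_{U_n}$, $\tau = \tau_U$. Since $\overline{U_n} \sub U \sub D$ and $f, g$ are harmonic in $D$, harmonicity gives
\formula{
 f(x) & = \ex_x f(X(\tau_n)), & g(x) & = \ex_x g(X(\tau_n))
}
for all $x \in U_n$, and by the strong Markov property both $(f(X(\tau_n)))_n$ and $(g(X(\tau_n)))_n$ are non-negative $\pr_x$-martingales with respect to $(\mathcal{F}_{\tau_n})_n$. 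Since $X_t$ is a Hunt process (Assumption~\ref{asm:dual}) and $D$ is bounded, $\tau \le \tau_D < \infty$ almost surely, so quasi-left continuity delivers $X(\tau_n) \to X(\tau)$ a.s.

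Next I would check that $f$, $g$, and $g - f$ are each lower semicontinuous on $\X$. Harmonic functions in $D$ are continuous there under the uniform assumptions of Theorem~\ref{thm:repr} (Remark~\ref{rem:continuity}); both $f$ and $g$ vanish outside $D$, since $0 \le f \le g = 0$ on $\X \setminus D$ forces $f = 0$ there too; and $g - f \ge 0$ is harmonic in $D$ and also vanishes outside $D$. Non-negativity then pushes the lim inf at each boundary point $y \in \partial D$ above the boundary value $0$, giving lower semicontinuity on $\X$ for all three functions. Combined with the a.s.\ convergence $X(\tau_n) \to X(\tau)$, this yields
\formula{
 \liminf_{n \to \infty} h(X(\tau_n)) & \ge h(X(\tau)) \qquad \text{a.s.}
}
for $h \in \{f, g, g - f\}$.

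The argument will then close in two stages. For $g$: Doob's martingale convergence theorem supplies an a.s.\ limit $Y_\infty$ of $g(X(\tau_n))$, and Fatou combined with $\ex_x g(X(\tau_n)) = g(x) = \ex_x g(X(\tau))$ (the latter by regular harmonicity of $g$ in $U$) gives $\ex_x Y_\infty \le \ex_x g(X(\tau))$; the LSC inequality $Y_\infty \ge g(X(\tau))$ a.s.\ then forces $Y_\infty = g(X(\tau))$ a.s., and Scheff\'e upgrades the convergence to $L^1$. For $f$: the martingale $f(X(\tau_n))$ is dominated by the $L^1$-convergent sequence $g(X(\tau_n))$ and is therefore uniformly integrable, so it converges a.s.\ and in $L^1$ to some $Z_\infty$ with $\ex_x Z_\infty = f(x)$. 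LSC applied to $f$ yields $Z_\infty \ge f(X(\tau))$, and LSC applied to $g - f$ gives $Y_\infty - Z_\infty \ge (g - f)(X(\tau))$, which rearranges to $Z_\infty \le f(X(\tau))$ since $Y_\infty = g(X(\tau))$ a.s. Hence $Z_\infty = f(X(\tau))$ a.s., and $f(x) = \ex_x f(X(\tau))$, which is precisely regular harmonicity of $f$ in $U$.

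The hardest step is the final identification $Z_\infty = f(X(\tau))$: a single application of Fatou gives only the one-sided bound $\ex_x Z_\infty \le \ex_x f(X(\tau))$, and one cannot conclude equality without knowing $\ex_x f(X(\tau)) = f(x)$, which is precisely the target. The resolution is to apply the LSC/Fatou argument \emph{twice}---once to $f$ and once to the auxiliary non-negative harmonic function $g - f$---and use the already-available regular harmonicity of $g$ in $U$ as the bridge that converts the two one-sided inequalities into the required a.s.\ equality.
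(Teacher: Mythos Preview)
Your martingale/Scheff\'e framework is sound, but there is a genuine gap at the step where you claim lower semicontinuity of $f$, $g$, and $g-f$ on $\X$. The issue is not at points of $\X\setminus D$ or $\partial D$ (non-negativity handles those), but at points of $D$ itself: you invoke Remark~\ref{rem:continuity} to assert that harmonic functions in $D$ are continuous there, yet that remark yields \emph{continuity} only when the process is conservative, and conservativity is not assumed in Theorem~\ref{thm:repr} (indeed, the framework explicitly allows killing). Without it you only have \emph{relative} continuity of ratios, which does not give LSC of $f$ or $g-f$ separately. Since $X(\tau)$ can land on $\partial U\cap D$ and your $X(\tau_n)$ can approach it from inside $U$, the inequality $\liminf_n f(X(\tau_n))\ge f(X(\tau))$ genuinely needs regularity of $f$ at that interior point of $D$, and you have not secured it.

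The paper sidesteps this entirely by a different choice of approximating sets: instead of $\overline{U_n}\sub U$, it takes $U_n=U\cap D_n$ with $D_n\uparrow D$ and $\overline{D_n}\sub D$. The payoff is that if $\tau_{U_n}<\tau_U$ then $X(\tau_{U_n})\in U\setminus D_n$; since $X(\tau_{U_n})\to X(\tau_U)$ and $X(\tau_U)\in D$ forces $X(\tau_U)\in D_m$ for some $m$ (open), one gets a contradiction for large $n$. Hence on $\{X(\tau_U)\in D\setminus U\}$ one has $\tau_{U_n}=\tau_U$ \emph{eventually}, so $h(X(\tau_{U_n}))=h(X(\tau_U))$ exactly for large $n$ and any $h$ --- no continuity or LSC required. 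The paper then splits $g(x)=\ex_x(g\ind_{D\setminus U})(X(\tau_{U_n}))+\ex_x(g\ind_{U\setminus U_n})(X(\tau_{U_n}))$, uses regular harmonicity of $g$ in $U$ to show the first term already tends to $g(x)$, forcing the second to vanish; domination $0\le f\le g$ then kills the corresponding piece of $f$, and the $D\setminus U$ piece passes to the limit by the exact-equality observation. Your Scheff\'e/UI argument would also go through verbatim with this choice of $U_n$, since the problematic LSC step is replaced by eventual pointwise equality.
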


\begin{proof}
Let $D_n$ be an ascending sequence of open sets such that $\overline{D_n} \sub D$ and $\bigcup_{n = 1}^\infty D_n = D$, and let $U_n = U \cap D_n$. Then $\tau_{U_n}$ increases to $\tau_U$, and, by quasi-left continuity, $X(\tau_{U_n})$ converges to $X(\tau_U)$ with probability one. It follows that if $X(\tau_U) \in D \setminus U$, then eventually $\tau_{U_n} = \tau_U$ for $n$ large enough up to an event of probability zero. Hence,
\formula{
 \lim_{n \to \infty} \ex_x (g \ind_{D \setminus U}) (X(\tau_{U_n})) = \ex_x (g \ind_{D \setminus U}) (X(\tau_U)) = g(x) .
}
Therefore,
\formula{
 \ex_x (f \ind_{U \setminus U_n})(X(\tau_{U_n})) & \le \ex_x (g \ind_{U \setminus U_n})(X(\tau_{U_n})) = g(x) - \ex_x (g \ind_{D \setminus U}) X(\tau_{U_n}))
}
converges to zero as $n \to \infty$. It follows that
\formula{
 f(x) & = \lim_{n \to \infty} \ex_x f(X(\tau_{U_n})) = \lim_{n \to \infty} \ex_x (f \ind_{D \setminus U})(X(\tau_{U_n})) \\
 & = \ex_x (f \ind_{D \setminus U})(X(\tau_U)) = \ex_x f(X(\tau_U)) ,
}
as desired.
\end{proof}

%
%

%
%

\end{document}